% stack_of_log_points.tex - The first tex file in the project on constructing the stack of log points in X.
%
%************************************************************************************************************
% PREAMBLE
%************************************************************************************************************
%
%
% DOCUMENT CLASS
%************************************************************************************************************
\documentclass[12pt]{amsart} % usually 12pt, article, and draft until you are finished
\textwidth=6.5in
\textheight=9in
%
%
%PACKAGES
%************************************************************************************************************
\usepackage{amssymb} %Use AMS Symbols/fonts
\usepackage{amsfonts} %Use AMS Symbols/fonts
\usepackage{amsmath} % Use AMS Math commands
\usepackage{amsthm} % Use AMS Theorem environments
\usepackage{array} % For better arrays (eg matrices) in maths
\usepackage{geometry} % Gives layout options.  See geometry.pdfge
\usepackage{graphicx} % Builds on the graphics package.  Allows for graphics.
\usepackage{mathrsfs} % Use script font
\usepackage{pifont} % Use pifont to add more symbols
%\usepackage[small, nohug, heads=littlevee]{diagrams} % For nice commutative diagrams
%\diagramstyle[labelstyle=\scriptstyle] % labels in commutatibe diagrams will print in script
%\usepackage[all,dvips,ps,cmtip]{xy} % For even nicer commutative diagrams
\usepackage[all]{xypic}
\usepackage{verbatim} % allows for multiline comments
\usepackage{stmaryrd}
\usepackage{color}

\numberwithin{equation}{section}

%PAGE SETUP
%************************************************************************************************************
\pagestyle{headings} %Header displays page # and section headers.
\geometry{margin=1in} % change the margins to 2 inches all round
%
%
%
% THEOREM ENVIRONMENTS
%************************************************************************************************************
\theoremstyle{plain} % The default theorem style - for your everyday theorem environments
%\swapnumbers
\newtheorem{thm}{Theorem}[subsection] % For theorems.  [section] keeps section # in the numbering.
\newtheorem{cor}[thm]{Corollary} % For corollaries.  [thm] keeps us within the numbering for thm.
\newtheorem{lem}[thm]{Lemma} % For lemmas.  [thm] keeps us within the numbering for thm.
\newtheorem{prop}[thm]{Proposition} % For propositions.  [thm] keeps us within the numbering for thm.
 %For conjectures. [thm] keeps us within the numbering for thm.
 % For axioms.  Uses an independent numbering.
%\newtheorem*{thm}{Theorem} % For theorems.  No numbering.
%\newtheorem*{cor}{Corollary} % For corollaries.  No numbering.
%\newtheorem*{lem}{Lemma} % For lemmas.  No numbering.
%\newtheorem{prop}{Proposition} % For propositions.  numbering.
%\newtheorem*{ax}{Axiom} % For axioms.  No numbering.
%\newtheorem*{con}{Conjecture} %For conjectures. No numbering.
%
%
\theoremstyle{definition} % For definition enviroments
\newtheorem{defn}[thm]{Definition} % For definitions.  [thm] keeps us within the numbering for thm.
 % For examples.  [thm] keeps us within the numbering for thm.
 % For problems.  [thm] keeps us within the numbering for thm.
\newtheorem{defn*}{Definition}[section] % For definitions numbering by
                                % sec.
 %  No numbering.
\newtheorem{prop*}{Proposition}[section] % For definitions numbering
                                % by sec
%\newtheorem*{ex}{Example} % For examples.  No numbering.
%\newtheorem{pr}{Exercise}[subsection]% For problems.  No numbering.
%
%
\theoremstyle{remark} % For definition enviroments
\newtheorem*{rem}{Remark} % For remarks.  No numbering.
 % For notation.  No numbering.
%
%
% SYMBOLS
%************************************************************************************************************
% Blackboard Notation
\newcommand{\N}{\mathbb{N}} % Natural Numbers
     % Integers
\newcommand{\Q}{\mathbb{Q}} % Rational Numbers
 % Real Numbers
\newcommand{\C}{\mathbb{C}} % Complex Numbers
     % An Abstract Field
\newcommand{\A}{\mathbb{A}}     % Affine A
    % Projective P
\newcommand{\G}{\mathbb{G}} % Real Numbers
%
% Script Letters
\newcommand{\sA}{\mathcal{A}}
\newcommand{\sB}{\mathcal{B}}

\newcommand{\sF}{\mathcal{F}}

\newcommand{\sI}{\mathcal{I}}
\newcommand{\sK}{\mathcal{K}}
\newcommand{\sM}{\mathcal{M}}
\newcommand{\sN}{\mathcal{N}}
\newcommand{\sO}{\mathcal{O}}

\newcommand{\sT}{\mathcal{T}}
\newcommand{\sX}{\mathcal{X}}

%
% Fancy Script Letters

%

%

%\newarrow{Corresponds} <--->
%\newarrow{Equality} =====
%

% Some fancy shortcuts

\newcommand{\vir}{\mathrm{vir}}
\newcommand{\virclass}[1]{{[{#1}]^\vir}}            % Virtual fundamental class
   % Moduli space of twisted stable maps into #3
   % The most common use of the above
  % Moduli space of twisted stable maps into #3
 % The most common use of the above
            % The cyclotomic inertia stack of #2
         % The most common use of the above
    % The rigidfied cyclotomic inertia stack of #2
           % The most common use of the above
 % Is this the slash we were looking for?

%%%%%%%%%%%%%%%%%%%%%%

%BEGIN DOCUMENT
%************************************************************************************************************
\begin{document}
% TITLE SETUP
%************************************************************************************************************
\title{The Evaluation Space of Logarithmic Stable Maps}

%The first and third author are supported by the NSF award 0603284.
%The second author is supported by an NSF postdoctoral fellowship.
%The fourth author is supported by an NSERC PGS-D fellowship.             

\author{Dan Abramovich}

\author{Qile Chen}

\author{William Gillam}

\author{Steffen Marcus}

\thanks{Research of Abramovich, Chen supported in part by funds from NSF award 0603284. Gillam supported by an NSF postdoctoral fellowship.  Marcus supported by an NSERC PGS-D doctoral fellowship.}

\address{Department of Mathematics\\
Brown University\\
Box 1917\\
Providence, RI 02912\\
U.S.A.}
\email{abrmovic, q.chen, wgillam, ssmarcus@math.brown.edu}

\date{\today} % Insert a date into {} if you wish
\begin{abstract} The evaluation stack $\wedge X$ for minimal logarithmic stable maps is constructed, parameterizing families of standard log points in the target log scheme.  This construction provides the ingredients necessary to define appropriate evaluation maps for minimal log stable maps and establish the logarithmic Gromov-Witten theory of a log-smooth Deligne-Faltings log scheme.
\end{abstract}
\maketitle
\setcounter{tocdepth}{1}
%\tableofcontents
%
%
%********************************************************************************************************
\section{Introduction}

\subsection{Main results}
Logarithmic Gromov-Witten (GW) theory was first proposed during a 2001 workshop lecture by Bernd Siebert  \cite{S01}. It provides an approach to vastly generalize {\em relative} GW theory, in which enumerative invariants of curves on varieties satisfying certain contact conditions are defined and used for computing usual GW invariants through degenerations. Current relative GW theory was introduced in the symplectic setting by An-Min Li and Yongbin Ruan \cite{LR01}, as well as in parallel work of Eleny Ionel and Thomas Parker \cite{IP03,IP04}. It was recast in algebraic geometry by Jun Li \cite{Li01,Li02}. 

 The aim of this paper is to continue the development of {\em logarithmic} GW theory along the lines of \cite{K09}, \cite{C10}, and \cite{AC10}, where Kontsevich stable maps to log-schemes are introduced and studied.  The next steps we take here are the following:

In Definition \ref{def:wedgeprime} we define {\em families of  standard log points} in a fine saturated log scheme $(X,\sM_X)$. This is a category we denote $\wedge'X$ fibered over the category $\mathfrak{LogSch}^{\text{fs}}$  of fine and saturated log schemes. The first main result is the following: 

\begin{thm}\label{thm:main} 
There is a logarithmic algebraic stack $(\wedge X, \sM_{\wedge X})$ representing $\wedge'X$.
\end{thm}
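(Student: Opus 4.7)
The approach is to realize $\wedge X$ as a locally closed substack of Olsson's algebraic stack $\mathfrak{Log}_X$ of fs log morphisms to $(X,\sM_X)$, and to equip it with its tautological log structure. Recall that $\mathfrak{Log}_X$, whose $T$-points are morphisms of fs log schemes $T\to X$, is algebraic and locally of finite presentation; our $\wedge X$ will sit inside it.

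The first step is to unpack the definition of a family of standard log points into two geometric conditions on an object $(T\to X)$ of $\mathfrak{Log}_X$: (i) the characteristic sheaf $\overline{\sM}_T$ is étale-locally isomorphic to the constant sheaf $\underline{P}$ for some sharp fs monoid $P$; and (ii) the structure map $\sM_T\to\sO_T$ sends every local section with nonzero image in $\overline{\sM}_T$ to $0\in\sO_T$. Together these say that each geometric fiber of $T$ is a standard log point in the classical sense, and that the monoid type is locally constant in the family.

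The second step is to stratify by the monoid $P$. For each sharp fs monoid $P$, let $\wedge_P X$ be the substack cut out by requiring the characteristic type to be $\underline{P}$. Using the natural stratification of $\mathfrak{Log}_X$ by characteristic type, condition (i) cuts out a locally closed algebraic substack, while condition (ii) --- the simultaneous vanishing in $\sO_T$ of the images of a finite generating set of $P$ --- is closed within it. Then $\wedge X$ is the disjoint union $\bigsqcup_P \wedge_P X$, modulo the identifications induced by $\mathrm{Aut}(P)$. This sum is locally finite on $X$: for each geometric point $\bar{x}$ with stalk $Q=\overline{\sM}_{X,\bar x}$, only finitely many sharp fs $P$ admit a monoid homomorphism from $Q$ that could appear in a standard-point family factoring through $\bar x$, so on each quasi-compact piece of $X$ only finitely many $\wedge_P X$ contribute. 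The log structure $\sM_{\wedge X}$ is then the tautological one, coming from the universal family of standard log points over $\wedge X$; verifying that $(\wedge X,\sM_{\wedge X})$ represents $\wedge' X$ as a category fibered over $\mathfrak{LogSch}^{\text{fs}}$ is then formal from the construction.

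The hardest step will be Step 2: verifying that each stratum $\wedge_P X$ is algebraic with the correct $\mathrm{Aut}(P)$-action, and that the resulting assembly genuinely represents $\wedge'X$ rather than a slightly different functor (e.g., one allowing characteristic type to vary non-trivially on connected components). This requires careful tracking of monoid-theoretic data on fibers and in deformations, together with a precise invocation of Olsson's representability results for characteristic-stratified substacks of $\mathfrak{Log}$.
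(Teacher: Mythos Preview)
Your proposal rests on a misreading of Definition~\ref{Def:standard-log-point}. A \emph{family of standard log points} in $(X,\sM_X)$ over a base $(S,\sM_S)$ is not a log scheme $T$ whose characteristic is a locally constant sheaf $\underline P$ with structure map vanishing on non-units. It is the datum of an \emph{additional} generic rank-$1$ DF log structure $\sN$ on $S$ (so $\overline{\sN}\cong\N$ everywhere and the associated section is zero) together with a log morphism $(S,\sM_S\oplus_{\sO_S^*}\sN)\to(X,\sM_X)$. The base log structure $\sM_S$ is completely arbitrary---its characteristic need not be locally constant and its structure map need not vanish---so your conditions (i) and (ii) do not characterize the objects of $\wedge'X$. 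Consequently the stratification ``by the monoid $P$'' has no counterpart in the actual problem: the only monoid in play for the extra factor is $\N$, and the genuine decomposition (in the DF$(1)$ case) is by the \emph{contact order} $c\in\N$, which records how the generator of $\overline{\sM}_X$ maps to the $\sN$-summand. Your local finiteness claim is also false as stated: for $Q=\N$ there are infinitely many sharp fs monoids receiving a map from $Q$, and indeed the paper's $\wedge X$ has infinitely many components $\wedge_c X$.

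The paper's route is quite different. For a DF$(1)$ target it introduces a notion of \emph{minimal} family (Definition~\ref{defn:min})---one in which the base characteristic $\overline{\sM}_S$ is generated by the image of $\overline{\sM}_X$---and shows every family maps uniquely to a minimal one (Proposition~\ref{prop:min-cri}). The minimal families are then identified with an open substack of the fibre product $(\sA\times B\G_m)\times_{\mathcal{T}or_\C}\mathcal{T}or_{(X,\sM_X)}$, which gives algebraicity directly and supplies the tautological log structure $\sM_{\wedge X}$. The DF$(n)$ and general fs cases are obtained by writing $(X,\sM_X)$ as a finite limit of DF$(1)$ log schemes and invoking a fibre-product argument. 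If you want to pursue a $\mathcal{T}or_{(X,\sM_X)}$-based approach, the correct starting point is this fibre product, not a substack of $\mathfrak{Log}_X$ cut out by constancy of the characteristic.
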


In case $X$ is projective and $\sM_X$ is a  Deligne--Faltings log structure (Definition \ref{defn:DF}), it is shown in \cite{C10} and \cite{AC10} that there is a proper logarithmic Deligne--Mumford (DM) stack $(\sK_\Gamma(X,\sM_X),\sM_{\sK_\Gamma(X,\sM_X)})$ of logarithmic stable maps into $X$ with numerical characteristics $\Gamma=(g,n,c_i,\beta)$. Mark Gross and Bernd Siebert have recently announced a parallel theory of basic log maps that is expected to work in even more general situations. Here $g,n,\beta$ are as usual the genus, the number of marked points, and curve class, and $c_i$ are prescribed contact orders.

In Definition \ref{def:eval} we construct natural morphisms 
$$\text{ev}_i:\sK_\Gamma(X,\sM_X) \to \wedge X,\quad i=1,\ldots,n,$$ 
called {\em logarithmic evaluation maps}. In case $(X,\sM_X)$ is log smooth, we construct in Proposition \ref{prop:vfc} a natural virtual fundamental class
$$\left[\sK_\gamma(X,\sM_X)\right]^{\text{vir}}$$ 
on $\sK_\Gamma(X,\sM_X)$, which finally enables us to define {\em  logarithmic Gromov--Witten invariants} (Definition \ref{def:gw-inv}):
    \[
    \big<\gamma_1,\ldots,\gamma_n\big>^{(X,\sM_X)}_{\Gamma} :=
    \left(\prod_{i=1}^n \text{ev}_i^\ast\gamma_i \right)\cap
    \left[\sK_\gamma(X,\sM_X)\right]^{\text{vir}}. 
    \]

\subsection{Conventions and notation} We will always be working over the base field $\C$.  Unless otherwise specified, our neighborhoods will always be taken in the \'etale topology, so that by locally we will mean \'etale locally.  The algebraic closure of a point $p$ is denoted by $\bar p$.

We will be using the conventions of logarithmic geometry in the sense of Kato-Fontaine-Illusie, and will assume familiarity with the theory up to at least \cite{K89}. 

 Denote a log scheme or stack by the pair $(X,\exp_X:\sM_X\to\sO_X)$ where $X$ is the underlying space and $\exp_X:\sM_X\to \sO_X$ is the structure map of the associated log structure on $X$.  The notation $(X,\sM_X)$ will be used when no confusion arises.  Denote by $\overline{\sM_X}=\sM_X/\sO_X^\ast$ the characteristic of $(X,\sM_X)$.  A log scheme or log stack $X$ appearing without a log structure is assumed to be taken with the trivial log structure $\sO_X^\ast$.
 
 Denote by $\mathfrak{Sch}$, $\mathfrak{LogSch}$, and  $\mathfrak{LogSch}^{\text{fs}}$ the categories of schemes, log schemes, and fine saturated log schemes respectively.  All log schemes will be assumed to be objects in $\mathfrak{LogSch}^{\text{fs}}$ unless otherwise noted.

\subsection{Logarithmic stable maps}
	Inspired by Siebert's original lecture and recent successes in the use of log geometry to compactify moduli spaces, such as \cite{O08}, the moduli of Kontsevich log-stable maps was taken up in \cite{K09} and again in \cite{C10,AC10} with a view towards developing logarithmic GW theory in the setting where the relative divisors $D_i$ are simple normal crossings.	

Let $(X,\sM_X)$ be a log-smooth, fine, saturated log scheme where $X$ is a projective variety and its log structure $\sM_X$ is the divisorial log structure corresponding to a simple normal crossing divisor $D\subset X$.  The moduli stack of \emph{minimal} log stable maps $\sK_\Gamma(X,\sM_X)$ parameterizes families of maps $f: (C,\sM_C)\to (X,\sM_X)$ from log-smooth curves where the underlying map is stable in the usual sense.  Furthermore, the maps are required to satisfy an additional minimality condition necessary to control the log structures associated to the maps and ensure properness of the stack.  The notation $\Gamma$ collects the discrete data of the map such as genus, number of marked points, curve class and contact orders. 

\begin{rem}
The term `minimality' in this context originates from \cite{K09}. It is a phenomeon derived from the following general question: given a category $\sF$ fibered in groupoids over $\mathfrak{LogSch}$, when does there exist an algebraic stack $F$ fibered in groupoids over $\mathfrak{Sch}$ such that, when endowed with a log structure $\sM_F$, there is an equivalence $(F,\sM_F)\cong\sF$ of groupoid fibrations over $\mathfrak{LogSch}$?  %A minimality condition picks out precisely those objects in $\sF$ which form the desired stack $F$.  
This categorical question produces a nice categorical framework for minimality which can be completely described and will be made available in a subsequent paper.  
%Minimal maps satisfy a universal property (see Proposition \ref{prop:min-cri}) which follows a recurring theme in logarithmic moduli. We plan to revisit the categorical underpinnings of this notion in a subsequent paper. 
% as a condition placed on families of log-objects, identifying in some sense the usable families to parameterize in order to satisfy a certain universal property or a condition like properness. 
\end{rem}

The stack $\sK_\Gamma(X,\sM_X)$ is a proper Deligne--Mumford stack and
comes equipped with a natural log structure
$\sM_{\sK_\Gamma(X,\sM_X)}$, dictated by the minimality condition,
making the pair a log algebraic stack.  There is a universal
log-smooth curve $(\mathfrak{C},\sM_{\mathfrak{C}})$ fitting into the
following universal diagram: 
\[
\xymatrix{
(\mathfrak{C},\sM_{\mathfrak{C}}) \ar[r]^f \ar[d] & (X,\sM_X)\\
(\sK_\Gamma(X,\sM_X),\sM_{\sK_\Gamma(X,\sM_X)}) \ar[d]& \\
\sK_\Gamma(X,\sM_X).&\\}
\]
As a fibered category over $\mathfrak{LogSch}^\text{fs}$, the log stack $(\sK_\Gamma(X,\sM_X),\sM_{\sK_\Gamma(X,\sM_X)})$ parameterizes logarithmic stable maps to $(X,\sM_X)$ over a base log scheme with an arbitrary log structure.  A morphism $S\to\sK_\Gamma(X,\sM_X)$ from a scheme $S$ corresponds precisely to a strict morphism $(S,\sM_S)\to (\sK_\Gamma(X,\sM_X),\sM_{\sK_\Gamma(X,\sM_X)})$, giving an extended diagram
 \[
\xymatrix{
(C^{min},\sM_C^{min}) \ar[r] \ar@/^1pc/[rr]^{f'} \ar[d] & (\mathfrak{C},\sM_{\mathfrak{C}}) \ar[r]\ar[d] & (X,\sM_X)\\
(S^{min},\sM_S^{min}) \ar[d] \ar[r]& (\sK_\Gamma(X,\sM_X),\sM_{\sK_\Gamma(X,\sM_X)}) \ar[d]& \\
S\ar[r] & \sK_\Gamma(X,\sM_X). &\\}
\]
In this way $\sK_\Gamma(X,\sM_X)$ is a stack over $\mathfrak{Sch}$ parameterizing minimal log stable maps.

\subsection{Evaluation spaces} 
The log structure $\sM_C$ on the source curve of a minimal log stable map prescribes not only the standard log points on $C$ giving the marked points of the map, but also contact orders for each of these points.  This extra data is pulled back from the target log-structure $\sM_X$ through the log-map $f:(C,\sM_C)\to (X,\sM_X)$.  It is a situation quite similar to that of twisted stable maps \cite{AV02,AGV08}, where marked points are endowed with a $B\mu_m$ stack structure and their evaluations in a target stack $\sX$ are studied using the rigified cyclotomic inertia stack $\overline{\sI}_\mu(\sX)$.  The inertia stack is constructed by parameterizing families of maps from $B\mu_m$ into the target.  To construct the logarithmic evaluation space, we parameterize families of log-maps from the standard log point into $(X,\sM_X)$.  

The theory of minimal log stable maps in \cite{C10,AC10} is outlined for target schemes with a Deligne-Faltings log structure.  Much use is made of the moduli of such log structures and section \ref{s:modlog} is devoted to this topic.  Our main construction begins in section \ref{sec:logpts}, covering the case where $\sM_X$ is a Deligne-Faltings log structure corresponding to a single line bundle with a section.  %Naively studying families of standard log points in $(X,\sM_X)$ builds a fibered category $\wedge'X$ over $\mathfrak{LogSch}^\text{fs}$.  
A notion of minimality is introduced to identifiy the necessary families to include in order to build a stack $\wedge X$ over the $\mathfrak{Sch}$.  The bulk of this section is devoted to the technicalities of the construction of $\wedge X$, showing it is algebraic, and identifying properties which make it useful for log GW theory.  Section \ref{ss:dfncase} generalizes the construction to any fine, saturated log structure. Evaluation maps $\text{ev}_i:(\sK_\Gamma(X,\sM_X),\sM_{\sK_\Gamma(X,\sM_X)})\to(\wedge X,\sM_{\wedge X})$
are defined in section \ref{s:loggw}, as are the GW invariants produced in this setting.  

%\subsection{Acknowledgments}  The first and third author are supported by the NSF award 0603284.  The second author is supported by an NSF postdoctoral fellowship.  The fourth author is supported by an NSERC PGS-D fellowship.

%********************************************************************************************************
\section{Moduli of log structures}\label{s:modlog}

The construction of the evaluation stack $\wedge X$ presented below goes by way of the moduli of Deligne-Faltings log structures, that is, log structures corresponding to line bundles with sections.  To begin, we remind the reader of Olsson's stacks of log structures, as these also play an important role in our constructions.

\subsection{Olsson's stacks of log structures} \label{ss:logstack}
Let $(S,\sM_S)$ be a fine log scheme (not necessarily saturated).  In \cite{O03}, Martin Olsson constructs the algebraic stack $\mathcal{L} og_{(S,\sM_S)}$ parameterizing fine log structures.  As a category fibered over the category of $S$-schemes $\mathfrak{Sch}_S $, it is defined as follows:
\begin{enumerate}
\item[(a)] an object of $\mathcal{L} og_{(S,\sM_S)}$ over the $S$-scheme $f:Y\to S$ consists of a fine log structure $\sM_Y$ on $Y$ and a morphism $f^\flat: f^\ast\sM_S\to\sM_Y$;  
\item[(b)] a morphism over $X\to Y$ consists of a \emph{strict} morphism $(X,\sM_X)\to (Y,\sM_Y)$ over $(S,\sM_S)$. 
\end{enumerate}
The data of an object is equivalent to a morphism of fine log schemes $f:(Y,\sM_Y)\to(S,\sM_S)$ extending $f$.  This stack is algebraic (though not quasi-seperated), and locally of finite presentation over $S$.

When $\sM_S$ is also saturated, the open substack $\mathcal{T}or_{(S,\sM_S)}$ in $\mathcal{L} og_{(S,\sM_S)}$ parameterizes fine, saturated log structures.  Given an $S$-scheme $f:Y\to S$, a morphism $Y\to  \mathcal{T}or_{(S,\sM_S)}$ corresponds to a fine, saturated log structure $\sM_Y$ and a log morphism $f:(Y,\sM_Y)\to(S,\sM_S)$.  There is a forgetful map $\text{Log}:\mathcal{T}or_{(S,\sM_S)}\to\mathcal{T}or_{\C}$ forgetting all the data but the log structure $\sM_Y$.  If $(S,\sM_S)$ is log smooth, then the map Log is a smooth representable morphism of algebraic stacks.  For our construction of $\wedge X$ in section \ref{ss:wedgex}, we will need to consider $\mathcal{T}or_{(X,\sM_X)}$ for our target log scheme $(X,\sM_X)$.

\subsection{$DF(n)$ log structures} \label{ss:dfn-log-str} The divisorial log structure associated to a divisor $D\subset X$ is defined on a neighbourhood $U$ by
\[ \
\sM_{D\subset X}(U):= \left\{ f\in\sO_X(U): f \text{ is invertible on } (U\setminus D)\right\}.
\]
This sheaf of monoids motivates the relationship between relative and logarithmic Gromov-Witten theory.  The case when $D$ is a simple normal crossings divisor corresponds to the divisorial log structure $\sM_X$ on $X$ admitting a morphism of sheaves $\N^r\to \overline{\sM}
_X$ that locally lifts to a chart.  Following Kato, we call locally free log structures with this property Deligne-Faltings log structures.

\begin{defn}\label{defn:DF}
Let $S$ be a scheme.  A log structure $\sM_S$ on $S$ is called a \emph{Deligne-Faltings} (DF) \emph{Log Structure} if there exists a morphism of sheaves of monoids $\N^r\to\overline{\sM_S}$ lifting locally to a chart $\N^r\to\sM_S$. A DF log structure $\sM_{S}$ is \emph{generic} if it is nontrivial and the map $\N^{r}\to \overline{\sM}_{S}$ is an isomorphism on every geometric point.  We define the \emph{rank} of a DF log structure $\sM_S$ to be the integer $r:=\text{max}\{k:{\overline{\sM}_S}_{\bar p}\cong \N^k\}$ where the maximum is taken over all geometric points $\bar p\in S$.  For notational convenience, we call a DF log structure of rank at most $n$ a DF($n$) \emph{log structure}.  
\end{defn}

The data of a DF($n$) log structure is equivalent to a collection of $n$ line bundles with sections $\{(L_i, s_i)\}_{i=1}^n$ (see \cite{K89}, Complement 1).  One direction is fairly obvious.  Let $e_i$ be the $i$'th standard generator of $\N^n$, and let  $\beta:\N^n\to \overline{\sM_S}$ be the global map with local liftings $\tilde \beta:\N^n\to \sM_S$.  Let $\pi:\sM_S\to\overline{\sM_S}$ be the quotient map.  The pre-image $\pi^{-1}(\beta (e_i))$ is an $\sO_S^\ast$-torsor, which corresponds to a line bundle on $S$, say $L_i$.  The map $\pi^{-1}(\beta (e_i))\subset\sM_S\overset{\alpha}{\to}\sO_S$ determines a section $s_i:L_i\to\sO_S$ of $\Gamma(X,L^\vee)$.  Thus the $DF(n)$ log structure $\sM_S$ gives the data of the pairs $\{L_i,s_i\}_{i=1}^{n}$.  In fact, such a collection is sometimes taken to be the definition of a DF log structure elsewhere in the literature (see \cite{K89}), with rank then defined to be the number of line bundles $n$.  Our notion of rank in Definition \ref{defn:DF} is a bit more subtle, and depends on the zero loci of the sections $s_i$.   

We will in particular be interested in parameterizing DF(1) log
structures.  Recall that a log point is a log scheme $(\text{Spec
}\C,\C^\ast\oplus P)$, where $P$ is a monoid and the structure map
$\alpha:\C^\ast\oplus P\to\sO_{\text{Spec } \C}$ is given by sending
$a\oplus \overline{0}$ to $a$ and everything else to $0$.  When $P=\N$
we call this the \emph{standard log point}.

\begin{defn}\label{Def:standard-log-point}
A \emph{family of DF(1) log structures} over a log scheme $(S,\sM_S)$ is a morphism of log schemes $(f,f^\flat):(S,\sM_{S}')\to(S,\sM_S)$ such that the morphism $f:S\to S$ on the underlying schemes is the identity and $\sM_{S'}=\sM_S\oplus_{\sO_{S}^{*}}\sN$ where $\sN$ is a DF log structure given by a map $\N\to\overline\sN$.  A family of DF(1) log structures $(f,f^\flat):(S,\sM_{S'})\to(S,\sM_S)$ is a \emph{family of standard log points} if in particular $\sN$ is a generic rank 1 DF log structure.
\end{defn}

\subsection{$[\A^1/\G_m]$ and $B\G_m$ as classifying stacks of DF($1$) log structures} \label{ss:families} The stack $[\A^1/\G_m]$ with the trivial log structure can be interpreted as parameterizing families of DF(1) log structures.  Here the quotient is taken with respect to the multiplication action.  We quickly outline this example, as this interpretation plays an important role in the constructions to follow.

The stack $B\G_m$ sits inside the quotient $\sA:=[\A^1/\G_m]$ as the origin.  Let $i:B\G_m\to\sA$ be the inclusion map.  A morphism $X\to\sA$ from a scheme $X$ corresponds to a principal $\G_m$-bundle $P\to X$ with a $\G_m$-equivariant map $P\to \A^1$:
\[
\xymatrix{
P \ar[r] \ar[d] & \A^1 \ar[d]^\pi\\
X \ar[r] & \sA.\\}
\]
The $\G_m$-bundle $P$ extends uniquely to a line bundle $L\to X$, and the equivariant map $P\to\A^1$ determines a morphism of line bundles $L\to\sO_X$, that is, a section $s\in\Gamma(X,L^\vee)$.  This data is equivalent to a DF(1) log structure on $X$.  This process is certainly reversible, and in this way $\sA$ classifies DF(1) log structures (see \cite{O03} Example 5.13).

The map $\pi:\A^1\to\sA$ gives a universal line bundle with universal section $s$ determined by the multiplication action of $\G_m$.  Thus $\sA$ is naturally equipped with the DF log structure $\sM_\sA$ given by $(\A^1,s)$.  Consider the family of DF(1) log structures given by the morphism $g:(\sA,\sM_{\sA})\to\sA$ which is the identity on $\sA$ and the inclusion $g^\flat:g^\ast\sO_\sA^\ast\to \sM_\sA$ on log structures.  We exhibit this as the universal family of DF(1) log structures over $\sA$.

The data of a morphism $(S,\sM_S)\to \sA$ where $(S,\sM_S)$ is a log scheme is equivalent to giving a family of DF(1) log sctructures over $(S,\sM_S)$.  A morphism of log schemes $h:(S,\sM_S)\to \sA$ corresponds to a cartesian diagram, 
\[
\xymatrix{
(S,\sM_S') \ar[r]^{h'} \ar[d] & (\sA,\sM_A) \ar[d]^g\\
(S,\sM_S) \ar[r]_h & \sA}
\]
in the category $\mathfrak{LogSch}^\text{fs}$.  The sheaf of monoids $\sM_S'$ is the log structure associated to the pushout of $\sM_S$ with $h^{-1}\sM_\sA$.  Thus $\sM_S'=\sM_S\oplus\sN$ where, by our discussion above and the definition of $\sM_\sA$, $\sN$ is the DF(1) log structure given by the morphism $S\to\sA$.  Inverting this, let $(S,\sM_S\oplus \sN)\to(S,\sM_S)$ be a given family of DF(1) log structures.  The DF(1) log structure $\sN$ corresponds to a morphism $S\to\sA$, and this determines the morphism $(S,\sM_S)\to\sA$.

Thus, the log stack $\sA$ represents the fibered category over $\mathfrak{LogSch}$ parametrizing families of rank 1 DF log structures.  As a category fibered over $\mathfrak{Sch}$, the stack $B\G_m$ parameterizes line bundles with the zero section.  Through its inclusion, $B\G_m$ has the log structure $\sM_{B\G_m}:=i^\ast\sM_{\sA}$ induced by restriction. When viewed as a universal family $(B\G_m,\sM_{B\G_m})$ over $B\G_m$, a morphism of log schemes $(S,\sM_S)\to B\G_m$ is equivalent to a family of standard log points over $(S,\sM_S)$.  Thus, the log stack $B\G_m$ is similarly the fibered category over $\mathfrak{LogSch}$ parametrizing families of standard log points.

\subsection{Families of Standard Log Points in a Log Scheme $(X,\sM_X)$}\label{ss:wedgeprime}

Fix a fine, saturated log scheme $(X,\sM_X)$.  We now describe the category of standard log points in $(X,\sM_X)$ fibered over the category $\mathfrak{LogSch}^{\text{fs}}$. 

\begin{defn}\label{def:wedgeprime}
Define a category $\wedge' X$ fibered over the category $\mathfrak{LogSch}^{\text{fs}}$ as follows:
\begin{enumerate}
\item[(a)] An object of $\wedge' X(S,\sM_S)$ over a log scheme $(S,\sM_S)$ consists of a family of standard log points $(S,\sM_{S}')\to (S,\sM_{S})$ with a morphism $\phi:(S,\sM_{S}')\to (X,\sM_X)$,
\begin{equation*}\label{logpt:log-pt-defn}
\xymatrix{
(S,\sM_{S}') \ar[r]^\phi \ar[d] & (X,\sM_X)\\
(S,\sM_S). & \\}
\end{equation*}
\item[(b)] An arrow consists of a morphism of families of standard log points $F:(S,\sM_{S}') \to (T,\sM_{T}')$ over some $f:(S,\sM_S)\to (T,\sM_T)$ forming a cartesian square and making the following diagram commutative:
\begin{equation*}\label{equ:arrow-log-pt}
\xymatrix{
&&(X,\sM_X)\\
(S,\sM_{S}') \ar[r]^F \ar[d] \ar@/^/[urr]^{\phi_{S}}& (T,\sM_{T}') \ar@/_/[ur]_{\phi_{T}}\ar[d]\\
(S,\sM_S) \ar[r]_f & (T,\sM_T).\\}
\end{equation*}
\end{enumerate}
\end{defn}

Although it is certainly useful to parameterize such families over log schemes, we are especially interested in building the log algebraic stack $\wedge X$ over the category $\mathfrak{Sch}$ which, when equipped with a natural log structure and viewed as a category fibered over $\mathfrak{LogSch}^\text{fs}$, gives exactly $\wedge'X$.  We are led to a notion of minimality for families of standard log points in $(X,\sM_X)$.

\section{The stack of standard log points in $(X,\sM_X)$: DF(1) case} \label{sec:logpts}

In this section and the next we build the evaluation stack $\wedge X$.  Throughout, fix a target log scheme $(X,\sM_X)$ where $\sM_X$ is a DF($1$) log structure.  In section \ref{ss:dfncase} we will use a limit argument to generalize to the case of an arbitrary fine, saturated log scheme.

\subsection{Minimal Families}\label{ss:minimality}  We now introduce a notion of minimality for families of standard log points in $(X,\sM_X)$.  Minimality in this case can be described completely geometrically as a condition on a map of characteristic monoids.  This allows us to identify the appropriate families to parameterize when constructing $\wedge X$ in section \ref{ss:wedgex}.

\begin{defn}\label{defn:min}
A family of standard log points in $(X,\sM_X)$
\[ 
\xymatrix{
(S,\sM_{S}') \ar[d]\ar[r]^{\phi_S} & (X,\sM_X)\\
(S,\sM_S). &
}
\]
is called {\em minimal} if over each geometric point $s\in S$ the composition 
\[(\phi_S^\ast\overline{\sM}_{X})_s\to \overline{\sM}'_{S,s}\cong\overline{\sM}_{S,s}\oplus\overline{\sN}_s \to \overline{\sM}_{S,s}\] 
gives a surjection $(\phi_S^\ast\overline{\sM}_X)_s\to\overline{\sM}_{S,s}$. 
\end{defn}

\begin{prop}\label{prop:min-open}
Minimality is an open condition on $\wedge'X$.
\end{prop}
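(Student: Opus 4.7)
The approach is to show directly that if minimality holds at a geometric point $\bar s \in S$, then it holds on an étale neighborhood of $\bar s$, which suffices for openness.

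After shrinking étale-locally near $\bar s$, choose a neat chart $P \to \sM_S$ with $P = \overline{\sM}_{S,\bar s}$ and a chart $\N \to \sM_X$ near $\phi_S(\bar s)$, available because $\sM_X$ is DF(1). Since $\sN$ is a generic rank $1$ DF log structure, $\overline{\sN}$ is the constant sheaf $\underline{\N}$ and we get an induced chart $P \oplus \N \to \sM_{S}'$. The map $\phi_S^\flat$ then induces on characteristics a monoid map $\N \to P \oplus \N$ sending $1 \mapsto (p_0, c)$, and minimality at $\bar s$ is precisely the condition that $p_0$ generates $P$. In particular $P$ is cyclic, so $P = 0$ or $P = \N$.

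At any other geometric point $\bar t$ of this neighborhood, $\overline{\sM}_{S,\bar t} = P/F_t$ for the face $F_t \subset P$ of elements that become units at $\bar t$, and $[p_0]$ still generates $P/F_t$. Minimality at $\bar t$ requires that, if $\phi_S(\bar t) \in D$ (the divisor supporting $\sM_X$), the composition $\N \to P/F_t$ surjects, which is immediate from $1 \mapsto [p_0]$; and that if $\phi_S(\bar t) \notin D$, one has $P/F_t = 0$, i.e.\ $p_0 \in F_t$. To see both cases are automatic on the étale neighborhood, use that $\sN$ being generic rank $1$ DF means its associated line bundle section vanishes identically, so $\exp(n) = 0$ for every non-unit $n \in \sN$. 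Writing $\phi_S^\flat$ of the chart generator of $\sM_X$ as $(s,n) \in \sM_S \oplus_{\sO_S^\ast} \sN$ with $s$ of characteristic $p_0$ and $n$ of characteristic $c$, the pullback of the defining equation of $D$ is $\exp(s)\cdot\exp(n)$. If $c > 0$ this is identically zero and $\phi_S^{-1}(D)$ covers the whole chart, so only the first subcase arises. If $c = 0$ then $\exp(n)$ is a unit and $\phi_S^{-1}(D)$ is cut out by $\exp(s)$, whose vanishing locus near $\bar s$ equals the support of $\sM_S$ precisely because $p_0$ generates $P$; thus $\phi_S(\bar t) \notin D$ is equivalent to $p_0 \in F_t$, which gives the needed $P/F_t = 0$.

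Either way, minimality propagates to the full étale neighborhood of $\bar s$, so the minimal locus is open. The main point requiring care, and essentially the only place where real content enters, is the case split on $c$ together with the identification of $\phi_S^{-1}(D)$ with the support of $\sM_S$ in the $c=0$ case; both rest on the generic rank $1$ hypothesis on $\sN$ and the interpretation of DF($1$) log structures via line bundles with sections.
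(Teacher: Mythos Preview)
Your argument is correct, but it takes a genuinely different route from the paper's. The paper's proof is a three-line application of Olsson's Lemma~3.5(iii): at a generization $t$ of $s$ the cospecialization map $\overline{\sM}_{S,s}\to\overline{\sM}_{S,t}$ is surjective, so the image of $e$ (which generates $\overline{\sM}_{S,s}$ by minimality at $s$) still generates $\overline{\sM}_{S,t}$; in the subcase where $\delta$ trivializes at $t$ one gets that the image of $e$ is zero and hence $\overline{\sM}_{S,t}=0$. No charts, no case split on $c$, no explicit identification of $\phi_S^{-1}(D)$.

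Your approach instead fixes charts, reads off the pair $(p_0,c)$, and splits on whether $c>0$ or $c=0$, identifying $\phi_S^{-1}(D)$ concretely in each case via the structure maps. This is longer but more transparent: it makes visible exactly why the ``$\phi_S(\bar t)\notin D$'' case forces $\overline{\sM}_{S,\bar t}=0$, and it avoids citing Olsson's lemma. It also anticipates the contact-order decomposition that the paper develops later (Lemma~\ref{lem:contact} and the stratification $\wedge X=\coprod_c\wedge_c X$), since your case split is precisely on the contact order. One minor wording point: where you say ``vanishing locus of $\exp(s)$ equals the support of $\sM_S$'', what you actually use (and what is true) is that $\exp(s)$ is a non-unit exactly on the support of $\sM_S$; this is the same thing over a reduced base but is the cleaner formulation in general.
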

\begin{proof}
Consider a family of standard log points in $(X,\sM_X)$ as given in the above definition.  Assume that it is minimal at a point $s$ along the base $S$. Let $\delta$ generate the the stalk $(\phi_S^\ast\overline{\sM}_X)_s$ and let $e + \sigma$ be the image of $\delta$ through ${\phi_{S,s}^\flat}$, where $e$ and $\sigma$ are elements of $\overline{\sM}_{S,s}$, and $\sN_{s}$ respectively.  If we generalize to a nearby point of $s$, then since the specialization map is surjective on characteristics (see \cite{O03}, lemma 3.5 part iii), we have that $e+\sigma$ is trivial, making $e$ is trivial. This proves the statement.
\end{proof}

Denote by $D\subset X$ the locus along which the log structure $\sM_{X}$ is non-trivial. The following result provides for the existence of `enough' minimal families.

\begin{prop}\label{prop:min-cri}
For any family $\phi_S:(S,\sM_{S}')\to(X,\sM_X)$ of standard log points in $(X,\sM_X)$ over $(S,\sM_S)$, there exists a minimal family $\phi^{min}:(S,{\sM_S'}^{min})\to(X,\sM_X)$ over $(S,\sM_S^{min})$ and a map $f^{min}:(S,\sM_{S})\to (S,\sM_S^{min})$ fitting into a commutative diagram
 \[
\xymatrix{
&&(X,\sM_X)\\
(S,\sM_S')\ar@/^/[urr]^{\phi_S}\ar[r]\ar[d]&(S,{\sM_{S}'}^{min}) \ar@/_/[ur]_{\phi^{min}}\ar[d] & \\
(S,\sM_S)\ar[r]_{f^{min}}&(S,\sM_S^{min}) &}
\]  
where the square is cartesian in $\mathfrak{LogSch}^\text{fs}$.  Furthermore, the pair $(f^{min},\phi^{min})$ is unique up to a unique isomorphism.
\end{prop}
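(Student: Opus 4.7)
The plan is to take $\sM_S^{min}$ to be the sub-log-structure of $\sM_S$ whose characteristic is precisely the image of $\phi_S^*\overline{\sM}_X$ under the natural map to $\overline{\sM}_S$, and to form $\sM_{S'}^{min}$ as the corresponding family of standard log points. Because $\sM_X$ is DF(1) the relevant image is locally either $0$ or $\N$, and because $\sM_S^{min}$ will sit inside $\sM_S$ the cartesian property reduces to a trivial amalgamation identity. Uniqueness is then forced at the characteristic level by minimality together with log-morphism axioms, and lifted to log structures via the fiber-product universal property.

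To execute this, first form the composition
\[
\phi_S^*\overline{\sM}_X \;\longrightarrow\; \overline{\sM}_{S}' \;=\; \overline{\sM}_S \oplus \overline{\sN} \;\xrightarrow{\mathrm{pr}_1}\; \overline{\sM}_S
\]
and define $\overline{\sM}_S^{min} \subset \overline{\sM}_S$ as its image subsheaf of monoids. At each geometric point $s$ the stalk is $0$ or $\N\cdot e$ for $e$ the image of the generator; torsion-freeness and sharpness of $\overline{\sM}_{S,s}$ give $\N\cdot e \cong \N$ when $e\ne 0$ and $\{0\}$ otherwise, so $\overline{\sM}_S^{min}$ is fs. Set
\[
\sM_S^{min} := \sM_S \times_{\overline{\sM}_S} \overline{\sM}_S^{min},
\]
an fs sub-log-structure of $\sM_S$ whose inclusion $\sM_S^{min}\hookrightarrow \sM_S$ provides $f^{min}:(S,\sM_S)\to (S,\sM_S^{min})$. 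Then define $\sM_{S}'^{min} := \sM_S^{min} \oplus_{\sO_S^*} \sN$, a family of standard log points over $(S,\sM_S^{min})$.

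The verifications are essentially formal. The induced map on characteristics $\phi_S^*\overline{\sM}_X \to \overline{\sM}_{S}'$ has image in $\overline{\sM}_S^{min}\oplus \overline{\sN}$ by construction, and since $\sM_{S}'^{min}$ is visibly the preimage of this subsheaf in $\sM_{S}'$, the map $\phi_S^\flat$ factors uniquely as $\phi_S^*\sM_X \to \sM_{S}'^{min} \hookrightarrow \sM_{S}'$, giving $\phi^{min}$ and the required commutativity. Minimality of $\phi^{min}$ is then tautological. For the cartesian property, the general commutative-monoid identity $B\oplus_A (A\oplus_D N) = B\oplus_D N$ yields
\[
\sM_S \oplus_{\sM_S^{min}} \bigl(\sM_S^{min} \oplus_{\sO_S^*} \sN\bigr) \;=\; \sM_S \oplus_{\sO_S^*} \sN \;=\; \sM_{S}',
\]
and the pushout is already fs, so the square is cartesian in $\mathfrak{LogSch}^{\text{fs}}$.

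For uniqueness, suppose $(\tilde f^{min}, \tilde\phi^{min})$ over $(S,\tilde\sM_S^{min})$ is another minimal family with $\tilde\sM_{S}'^{min} = \tilde\sM_S^{min}\oplus_{\sO_S^*}\tilde\sN$. The same pushout computation identifies $\tilde\sN$ canonically with $\sN$. Minimality forces $\overline{\tilde\sM}_S^{min}$ to be an fs sharp quotient of $\phi_S^*\overline{\sM}_X$, hence locally $0$ or $\N$. The log-structure axiom that the structure map restricts to a bijection on units implies the stalk map $\overline{\tilde\sM}_{S,s}^{min}\to\overline{\sM}_{S,s}$ sends nonzero classes to nonzero classes; together with torsion-freeness of $\overline{\sM}_{S,s}$ this forces the map to be injective on stalks, so $\overline{\tilde\sM}_S^{min}$ embeds as a subsheaf of $\overline{\sM}_S$ whose image coincides with that of $\phi_S^*\overline{\sM}_X$, namely $\overline{\sM}_S^{min}$. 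The universal property of the fiber product then upgrades this to a canonical isomorphism $\tilde\sM_S^{min}\cong \sM_S^{min}$, and hence $\tilde\sM_{S}'^{min}\cong \sM_{S}'^{min}$, compatible with $\phi^{min}$, $\tilde\phi^{min}$, and the cartesian squares. The main obstacle lies precisely here in the uniqueness argument: passing from characteristic-level agreement to log-structure-level uniqueness requires using the unit axiom and torsion-freeness to rule out formally different but non-injective competitors, and then invoking the fiber product to pin down the lift of the log structure itself.
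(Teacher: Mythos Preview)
Your proof is correct and follows essentially the same strategy as the paper: both take $\sM_S^{min}$ to be the sub-log structure of $\sM_S$ whose characteristic is generated by the image of $\phi_S^*\overline{\sM}_X$ under projection to $\overline{\sM}_S$, and set ${\sM_S'}^{min}=\sM_S^{min}\oplus_{\sO_S^*}\sN$. Your formulation via the image subsheaf and the fiber product $\sM_S^{min}=\sM_S\times_{\overline{\sM}_S}\overline{\sM}_S^{min}$ is an intrinsic, chart-free packaging of the paper's local construction (which picks a chart $\beta_1$, lifts the generator to $\beta_1(e)$, and shrinks $S$), and your uniqueness argument spells out in more detail what the paper summarizes as ``uniqueness follows from the uniqueness of the sub-log structure $\sM^{min}$.''
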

\begin{proof}
This is a local statement.  Shrinking $S$, we can assume there are global charts $\beta_{1}:\overline{\sM}_{S,s}\to \sM_{S}$ and $\beta_{2}:\overline{\sN}_{s}\cong \N\to \sN$ for some point $s\in S$. We assume also that $\phi_{S}(s)\in D$; the other case is similar and straightforward. Since $\sM_{X}$ is a DF(1) log structure, denote by $\delta$ the generator of $(\phi_S^\ast\overline{\sM}_{X})_s\cong\N$. As in the proof of the above proposition, let $e+\sigma$ denote the image of $\delta$ through the map $\phi_{S,s}^\flat:(\phi_S^\ast\overline{\sM}_{X})_s\to \overline{\sM}'_{S,s}\cong\overline{\sM}_{S,s}\oplus\overline{\sN}_s$, where $e\in \overline{\sM}_{S,s}$ and $\sigma\in \overline{\sN}_s$. Choose as our minimal log structure for the base $\sM_S^{min}$ the sub-log structure of $\sM_{S}$ generated by $\beta_{1}(e)$. Notice that a different choice of the chart $\beta_1$ will only alter the element $\beta_{1}(e)$ up to a unique invertible section of $\sM_{S}$. Thus, the sub-log structure $\sM_S^{min}$ is unique.  The obvious choice for our minimal log structure ${\sM_S'}^{min}$ is the direct sum $\sM_S^{min}\oplus_{\sO_{S}^{*}}\sN$. As a candidate for our minimal family of standard log points in $(X,\sM_X)$ we have the diagram
\begin{equation*}\label{logpt:local-min}
\xymatrix{
(S,{\sM'}^{min}) \ar[d]\ar[rr]^{\phi_{min}} && (X,\sM_X)\\
(S,\sM^{min}) & 
}
\end{equation*}
which comes with a natural log-map $f^{min}:(S,\sM_{S})\to (S,\sM^{min})$ simply by the construction of $\sM_S^{min}$. This family is certainly minimal at the point $s$, and is in fact a minimal family since minimality is an open condition and we need only further shrink $S$.  It is straightforward to check that the log map $f^{min}$ induces a morphism of families of standard log points in $(X,\sM_X)$ and is an isomorphism on the underlying schemes. Finally, uniqueness follows from the uniqueness of the sub-log structure $\sM^{min}$. 
\end{proof}

\subsection{The Stack $\wedge X$}\label{ss:wedgex}
We now construct the stack $\wedge X$ over the category $\mathfrak{Sch}$.  This stack parameterizes minimal families of standard log points in $(X,\sM_X)$.  We show that $\wedge X$ is isomorphic to an open substack of the fiber product $(\sA\times B\G_m)\times_{\mathcal{T}or_\C} \mathcal{T}or_{(X,\sM_X)}$, making $\wedge X$ an algebraic stack.

\begin{defn} Define a category $\wedge X$ fibered in groupoids over the category $\mathfrak{Sch}$ as follows:

\begin{enumerate}
\item[(a)] An object of the fiber $\wedge X(S)$ over the base scheme $S$ consists of a diagram
\[
\xymatrix{
(S,\sN_{S}') \ar[r]^\phi \ar[d] & (X,\sM_X)\\
(S,\sN_S) \ar[d] & \\
S\\}
\]
where $\phi$ gives a family of standard log points in $(X,\sM_{X})$ over $(S,\sN_{S})$ that is {\em minimal} in the sense of Definition \ref{defn:min}.

\item[(b)] An arrow sitting over a morphism of schemes $f:S\to T$ consists of a morphism of families of standard log points $F:(S,\sN_{S}') \to (T,\sN_{T}')$ over the strict morphism of log schemes $f:(S,\sN_S)\to (T,\sN_T)$ induced by $f$, forming a cartesian square and making the following diagram commutative:
\begin{equation*}\label{diag:log-pt-arrow}
\xymatrix{
&&(X,\sM_X)\\
(S,\sN_{S}') \ar[r]^F \ar[d] \ar@/^/[urr]^{\phi_{S}}& (T,\sN_{T}') \ar@/_/[ur]_{\phi_{T}}\ar[d]\\
(S,\sN_S) \ar[r]_f^{\text{strict}} & (T,\sN_T).\\}
\end{equation*}

%\item[(c)] We define a category $\wedge X$ fibered in groupoid over the category $\mathfrak{Sch}$, parameterizing minimal log points with arrows defined in (b).
\end{enumerate}

\end{defn}

We now give an alternate description of $\wedge X$ as an open substack of a fiber product of algebraic stacks.  Recall the definition in section \ref{ss:logstack} of the stack $\mathcal{T}or$ parameterizing fine, saturated log structures.  Furthermore, recall from the discussion in Example \ref{ss:families} that when taken with their trivial log structures, $\sA$ parameterizes families of DF(1) log structures and $B\G_m$ parameterizes families of standard log points.  Each comes with their respective universal log structure $\sM_\sA$ and $\sM_{B\G_m}$.  These log structures correspond to the inclusion $i:\sA\times B\G_m\to \mathcal{T}or_{\C}$.

Consider the fiber product $\sB:=(\sA\times B\G_m)\times_{\mathcal{T}or_\C} \mathcal{T}or_{(X,\sM_X)}$ given by the cartesian diagram
\[
\xymatrix{
\sB \ar[r] \ar[d] \ar[r] & \mathcal{T}or_{(X,\sM_X)} \ar[d]^{\text{Log}}\\
\sA\times B\G_m \ar[r]_i &\mathcal{T}or_{\C}. \\}
\]

The universal property of fiber product induces a morphism as follows:

\begin{defn}\label{defn:map-stack}
We define a morphism of fibered categories $\Phi:\wedge X \to \sB$.

Given an object in $\wedge X(S)$
 \[
\xymatrix{
(S,\sN_{S}') \ar[r]^{\phi_S} \ar[d] & (X,\sM_X)\\
(S,\sN_S) \ar[d] & \\
S\\}
\]
we obtain an object of $\sB(S)$ as follows: 
\begin{enumerate}
 \item the data of the log structure $\sN_S'\simeq \sN_S\oplus \sN$ is equivalent to a morphism $S\to \sA\times B\G_m$ since $\sN_S$ is a DF(1) log structure; 
 \item the arrow $\phi_{S}$ is equivalent to a morphism $S\to \mathcal{T} or_{(X,\sM_{X})}$.
\end{enumerate}
 Notice that the maps $S\to \mathcal{T} or_{\C}$ via $\sA\times B\G_m$ and $\mathcal{T} or_{(X,\sM_{X})}$ are identical, since they are given by $\sN_S\oplus \sN$. By the universal property of fiber products, this defines a morphism $S\to \sB$.  The morphism on arrows is defined similarly.  
\end{defn}

Denote by $\sM_{\sB}$ and $\sM_{\sB}'$ the log structures on $\sB$ pulled back from the canonical log structures on $\sA$ and $\sA\times B\G_{m}$ respectively. These log structures produce a family of standard log points $(\sB,\sM_{\sB}')\to (\sB,\sM_{\sB})$.  Furthermore, since the two compositions $\sB\to \sA\times B\G_{m}\to \mathcal{T} or_{\C}$ and $\sB\to \mathcal{T} or_{(X,\sM_{X})}\to \mathcal{T} or_{\C}$ coincide, there exists a map $\phi_\sB:(\sB,\sM_{\sB}')\to (X,\sM_{X})$. Thus $\sB$ naturally admits a family
\begin{equation*}\label{diag:univ-constr}
\xymatrix{
(\sB,\sM_{\sB}') \ar[r]^{\phi_\sB} \ar[d] & (X,\sM_{X}) \\
(\sB,\sM_{\sB})\ar[d]\\
\sB.&\\
}
\end{equation*}
of standard log points in $(X,\sM_{X})$.

Consider now any map $f:S\to \sB$ from a scheme $S$. The map $f$ corresponds to a unique family of standard log points in $(X,\sM_X)$ over $S$ given by the following pullback diagram:
\begin{equation*}\label{diag:univ-constr-1}
\xymatrix{
(S,\sM'_S)\ar[r] \ar[d]&(\sB,\sM_{\sB}') \ar[r] \ar[d] & (X,\sM_{X}) \\
(S,\sM_S)\ar[r]\ar[d]&(\sB,\sM_{\sB})\ar[d]\\
S\ar[r]&\sB.&\\
}
\end{equation*}
In this way, $\sB$ represents a fibered category over $\mathfrak{Sch}$ parameterizing families of standard log points in $(X,\sM_X)$ whose base log structure comes from a strict map to $(\sB,\sM_\sB)$.  Since the base log structure $\sM_S^{min}$ of a minimal family is constructed (in the proof of Proposition \ref{prop:min-cri}) as the sub-log structure generated by the single element $\beta_1(e)$, all minimal families are of this type.  It is not difficult to reformulate the definition of the map $\Phi$ using this dictionary for S-points of $\sB$.

Since minimality is an open condition, there must be an open substack $\sB'\subset\sB$ parametrizing minimal families of standard log points in $(X,\sM_{X})$ pulled back from $\sB$.  The morphism $\Phi$ factors through $\sB'$, providing a morphism $\Phi':\wedge X\to \sB'$. 

\begin{prop}
The functor $\Phi'$ is an equivalence. Hence, the stack $\wedge X$ is algebraic.
\end{prop}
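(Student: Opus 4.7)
The plan is to verify that $\Phi'$ is fully faithful and essentially surjective; algebraicity of $\wedge X$ will then follow immediately, since $\sB'$ is an open substack of $\sB$, and $\sB$ is algebraic as a $2$-fiber product of algebraic stacks (Olsson's results provide algebraicity of $\mathcal{T}or_\C$ and $\mathcal{T}or_{(X,\sM_X)}$, while $\sA \times B\G_m$ is manifestly algebraic).

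For essential surjectivity, start with an $S$-point $S \to \sB'$ and pull back the universal family on $\sB$ constructed just before the proposition. This yields a family of standard log points $(S, \sM_S') \to (X, \sM_X)$ over $(S, \sM_S)$. Because $\sM_S$ is obtained from $\sM_\sA$ along the composition $S \to \sA \times B\G_m \to \sA$, it is locally generated by a single element; and because $\sM_S' = \sM_S \oplus_{\sO_S^*} \sN$ with $\sN$ the generic rank $1$ DF log structure pulled back from $B\G_m$, this is a family of standard log points in the sense of Definition \ref{Def:standard-log-point}. Minimality is built into the definition of the open substack $\sB' \subset \sB$. This datum defines an object of $\wedge X(S)$ whose image under $\Phi'$ is canonically $2$-isomorphic to the original $S$-point.

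For fully faithfulness, unwind both sides over a map $f \colon S \to T$. A morphism in $\wedge X$ over $f$ consists of a morphism of families of standard log points $F \colon (S, \sN_S') \to (T, \sN_T')$ that forms a cartesian square over the strict map of log schemes $(S, \sN_S) \to (T, \sN_T)$ induced by $f$, together with compatibility $\phi_S = \phi_T \circ F$; strictness forces $\sN_S \cong f^*\sN_T$ and $\sN_S' \cong f^*\sN_T'$, so the data reduces to $f$ and these canonical identifications. On the other hand, by the universal property of the $2$-fiber product, a morphism $\Phi'(a) \to \Phi'(b)$ over $f$ in $\sB'$ is precisely a compatible pair of pullback isomorphisms for the classifying maps to $\sA \times B\G_m$ and $\mathcal{T}or_{(X,\sM_X)}$ over $\mathcal{T}or_\C$. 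These two packages of data are interconvertible under the dictionary established just before the proposition, where $S$-points of $\sB$ are identified with families of standard log points in $(X,\sM_X)$ whose base log structure is a strict pullback from $(\sB,\sM_\sB)$.

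The main obstacle is the bookkeeping step of verifying that the minimal base log structure $\sM_S^{min}$ built in the proof of Proposition \ref{prop:min-cri} — the sub-log structure generated by $\beta_1(e)$ — is canonically and functorially identified with the log structure classified by the $\sA$-component of $\Phi'$; its uniqueness, already noted in that proof, is exactly what makes this identification well-defined independently of chart choices. Once this identification is installed, both essential surjectivity and fully faithfulness reduce to the universal properties of $\sA$, $B\G_m$, and $\mathcal{T}or_{(X,\sM_X)}$, and algebraicity of $\wedge X$ is inherited from $\sB'$.
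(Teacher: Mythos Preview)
Your proof is correct and follows essentially the same approach as the paper: both rely on the moduli interpretation of $\sB$ via its universal family of standard log points, identify $\sB'$ with minimal such families, and conclude that $\Phi'$ is an equivalence because morphisms on both sides unwind to the same data. Your treatment of essential surjectivity (pulling back the universal family from an $S$-point of $\sB'$) is a bit more explicit about direction than the paper's, and your ``main obstacle'' paragraph about identifying $\sM_S^{min}$ with the $\sA$-component is exactly what the paper records in the paragraph just before the proposition; otherwise the arguments are the same.
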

\begin{proof}
That $\Phi'$ is both full and faithful follow from the above description of $\sB$ as a moduli stack of families of standard log points in $(X,\sM_X)$ pulled back from the family $(\sB,\sM_\sB')\to(\sB,\sM_\sB)$ since the morphisms are the same in both categories.  Essential surjectivity is obvious from the description of the map.  Consider a minimal family of standard log points in $(X,\sM_X)$ over $S$:
\[
\xymatrix{
(S,\sM_{S}') \ar[r]^{\phi_S} \ar[d] & (X,\sM_{X}) \\
(S,\sM_{S})\ar[d]&\\
S.\\
}
\]
The log structure $\sM_S'\cong\sM_{S}\oplus\sN$ is equivalent to a map $S\to \sA\times B\G_{m}$, and the log map $\phi_S$ is equivalent to a map $S\to \mathcal{T}or_{(X,\sM_{X})}$. The two compositions $S\to \sA\times B\G_{m}\to \mathcal{T} or_{\C}$ and $S\to \mathcal{T} or_{(X,\sM_{X})}\to \mathcal{T}or_{\C}$ coincide. Since the family is minimal, this is equivalent to a map $S\to \sB$ factoring through $\sB'$.
\end{proof}

\begin{rem}
The stack $\wedge X$ comes equipped with two canonical log structures $\sM_{\wedge X}$  and $\sN_{\wedge X}$ coming from the log structures $\sM_\sA$ and $\sM_{B\G_m}$ respectively.
\end{rem}

\subsection{The Category $(\wedge X,\sM_{\wedge X})$ fibered over $\mathfrak{LogSch}^\text{fs}$}

We will now make explicit the connection between the stack $\wedge X$ over $\mathfrak{Sch}$ and the groupoid fibration $\wedge' X$ over $\mathfrak{LogSch}^\text{fs}$ from Definition \ref{def:wedgeprime}.  We begin by discussing the universal structures on $\wedge X$.

The construction of the evaluation stack $\wedge X$ gives two natural log structures.  The first, $\sM_{\wedge X}$, is induced by restriction from the log structure $\sM_{\sA}$ on $\sA$.  The second, $\sN_{\wedge X}$, is induced by the log structure on $B\G_m$.  The pairs $(\wedge X,\sM_{\wedge X}\oplus\sN_{\wedge X})$ and $(\wedge X,\sM_{\wedge X})$ fit into a family of standard log points:
 \[
\xymatrix{
(\wedge X,\sM_{\wedge X}\oplus\sN_{\wedge X}) \ar[r] \ar[d] & (X,\sM_X)\\
(\wedge X,\sM_{\wedge X}). & \\}
\]

This family is a universal object for the evaluation stack, the map to $\wedge X$ given simply by forgetting the log structures and the map to $(X,\sM_X)$.  A morphism $f:S\to\wedge X$ is equivalent to a pull-back diagram
 \[
 \tag{$\ast$}
\xymatrix{
(S,\sM_S\oplus \sN) \ar[r] \ar@/^1pc/[rr]^{\phi_S} \ar[d] & (\wedge X,\sM_{\wedge X}\oplus\sN_{\wedge X}) \ar[r] \ar[d] & (X,\sM_X)\\
(S,\sM_S) \ar[d] \ar[r]^{f'}& (\wedge X,\sM_{\wedge X}) \ar[d]& \\
S\ar[r]^f & \wedge X. &\\}
\]

The map $f'$ above is strict, and the family of standard log points
 \[
\xymatrix{
(S,\sM_S\oplus\sN) \ar[r]^{\phi_S} \ar[d] & (X,\sM_X)\\
(S,\sM_S) & \\}
\]
corresponding to $f$ is a minimal family in the sense of definition \ref{defn:min} precicely because of the strictness of $f'$.

\begin{prop}\label{lem:family-wedgex}
The data of a morphism $(S,\sM_S)\to (\wedge X,\sM_{\wedge X})$ is equivalent to giving a family of standard log points over $(S,\sM_S)$ in $(X,\sM_X)$.  Thus, the log stack $(\wedge X,\sM_{\wedge X})$ represents the fibered category over $\mathfrak{LogSch}^\text{fs}$ parametrizing families of standard log points in $(X, \sM_X)$.  In other words, the categories $(\wedge X, \sM_{\wedge X})$ and $\wedge' X$ are equivalent as groupoid fibrations over $\mathfrak{LogSch}^\text{fs}$.
\end{prop}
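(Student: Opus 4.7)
The plan is to exhibit a quasi-inverse pair of functors between the groupoid of morphisms $(S,\sM_S)\to(\wedge X,\sM_{\wedge X})$ in $\mathfrak{LogSch}^{\text{fs}}$ and the fiber $\wedge' X(S,\sM_S)$, compatible with base change. The forward direction is already indicated by the diagram $(\ast)$ in the excerpt: given a log morphism $f':(S,\sM_S)\to(\wedge X,\sM_{\wedge X})$, I pull back the tautological family $(\wedge X,\sM_{\wedge X}\oplus\sN_{\wedge X})\to(\wedge X,\sM_{\wedge X})$ against $f'$ in $\mathfrak{LogSch}^{\text{fs}}$, then compose with the universal map to $(X,\sM_X)$. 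This produces a family of standard log points over $(S,\sM_S)$, functorial in $(S,\sM_S)$ by the universal property of fibered products.

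For the reverse direction, I start with an object of $\wedge' X(S,\sM_S)$, namely $\phi_S:(S,\sM_S')\to(X,\sM_X)$ over $(S,\sM_S)$. By Proposition \ref{prop:min-cri} there is a minimal family $\phi^{min}:(S,(\sM_S')^{min})\to(X,\sM_X)$ over some $(S,\sM_S^{min})$ together with a morphism $f^{min}:(S,\sM_S)\to(S,\sM_S^{min})$ through which the given family arises by cartesian pullback, and this datum is unique up to unique isomorphism. The minimal family corresponds by the very definition of $\wedge X$ to a morphism $f:S\to\wedge X$ whose pullback of $\sM_{\wedge X}$ is strictly identified with $\sM_S^{min}$. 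Composing with $f^{min}$ yields the required log morphism $(S,\sM_S)\to(\wedge X,\sM_{\wedge X})$.

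To see the two assignments are mutually inverse, I argue as follows. Starting from a log morphism $f'$ and pulling back the universal family, the underlying morphism $\underline{f'}$ already classifies a minimal family; by uniqueness in Proposition \ref{prop:min-cri}, the minimal reduction of the pulled-back family recovers that minimal family, and tracking $f^{min}$ through the construction recovers the log-structure component $f'^\flat$. In the other direction, applying the forward functor to the classifying morphism produced from a family $\phi_S$ reconstructs $\phi_S$ as the cartesian pullback along $f^{min}$, which matches the factorization built into Proposition \ref{prop:min-cri}. Functoriality in $(S,\sM_S)$ (and hence compatibility of the equivalence with the fibration structure) follows from the uniqueness of both the classifying morphism and the minimal reduction.

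The main technical obstacle is the bookkeeping of log structures under pullback in $\mathfrak{LogSch}^{\text{fs}}$. One must check that the strictness of $f'$ built into the definition of $\wedge X$ corresponds precisely to the minimality of its associated family, and that the decomposition $\sM_S'\cong\sM_S\oplus_{\sO_S^*}\sN$ is preserved by both pullback of the universal family and minimal reduction. Once these compatibilities are in place, together with the uniqueness statement of Proposition \ref{prop:min-cri} and the universal property of $\wedge X$ established in Proposition \ref{lem:family-wedgex}'s preceding proposition, the equivalence of groupoid fibrations follows formally.
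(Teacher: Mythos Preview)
Your proposal is correct and follows essentially the same approach as the paper: pull back the universal family to go from a log morphism to a family of standard log points, and invoke Proposition~\ref{prop:min-cri} to pass through the unique minimal reduction in the other direction, composing the resulting strict classifying map with $f^{min}$. The paper's proof is terser and does not spell out the mutual-inverse verification you sketch, but the argument is the same.
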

\begin{proof}
A morphism $h': (S,\sM_S)\to(\wedge X,\sM_{\wedge X})$ detrmines a commutative diagram
\[
\xymatrix{
(S,\sM_S\oplus \sN) \ar[r] \ar@/^1pc/[rr]^{\phi_S} \ar[d]_g & (\wedge X,\sM_{\wedge X}\oplus\sN_{\wedge X}) \ar[r] \ar[d] & (X,\sM_X)\\
(S,\sM_S) \ar[r]^{h'}& (\wedge X,\sM_{\wedge X})&}
\]
where the bottom left square is cartesian.  Unlike the top half of the diagram $(\ast)$, the map $h'$ is not necessarily strict. The pair of maps $g$ and $\bar h$ determine a family of standard log points in $(X,\sM_X)$. 

Inversely, let $\phi_S:(S,\sM_S') \to (X,\sM_X)$ be a family of standard log points in $(X,\sM_X)$ over $(S,\sM_S)$.  There exists a unique minimal family $\phi^{min}:(S,{\sM_S'}^{min})\to(X,\sM_X)$ sitting over $(S,\sM_S^{min})$ and a map $f^{min}:(S,\sM_{S})\to (S,\sM_S^{min})$ fitting into a morphism of families, as stated in Proposition \ref{prop:min-cri}.  Since $\phi^{min}$ in turn sits over the scheme $S$, this produces a map $f:S\to\wedge X$ and an extended diagram :
 \[
\xymatrix{
(S,\sM_S')\ar[d]\ar[r]&(S,{\sM_S'}^{min}) \ar[r]  \ar[d] & (\wedge X,\sM_{\wedge X}\oplus\sN_(S,\sM_S){\wedge X}) \ar[r] \ar[d] & (X,\sM_X)\\
(S,\sM_S)\ar[r]_{f^{min}}&(S,\sM_S^{min}) \ar[d] \ar[r]_{f'}& (\wedge X,\sM_{\wedge X}) \ar[d]& \\
&S\ar[r]_f & \wedge X. &\\}
\]
The desired map $h':(S,\sM_S)\to(\wedge X,\sM_{\wedge X})$ is determined by the composition of $f^{min}$ and $f'$, and the uniqueness of $f^{min}$. 
\end{proof}

%To summarize, the category $\wedge X$, when endowed with its natural log structure $\sM_{\wedge X}$, gives a category $(\wedge X,\sM_{\wedge X})$ fibered in groupoids over $\mathfrak{LogSch}^\text{fs}$ which parameterizes families of standard log points in $(X,\sM_X)$.  However, the stack $\wedge X$ over the category of $\mathfrak{Sch}$ parameterizes strict morphisms $f':(S,\sM_S)\to(\wedge X,\sM_{\wedge X})$ which correspond to minimal families of standard log points in $(X,\sM_X)$.

\subsection{Contact order decomposition of $\wedge X$}  In this section we give a stratification of $\wedge X$ indexed by $\N$.  The components of the stratification correspond to possible contact orders of marked points.  Consider the family $\phi_S:(S,\sM_{S}\oplus\sN_{S}) \to (S,\sM_{S})$ of standard log points in $(X,\sM_{X})$ over $(S,\sM_S)$, and let s be a geometric point of $S$.  We have a map on the level of characteristics
\begin{equation*}\label{diag:contact-order}
\phi_S^{*}\overline{\sM}_{X}\stackrel{\phi_S^\flat}{\rightarrow} \overline{\sM}_{S}\oplus\overline{\sN}_{S} \rightarrow \overline{\sN}_{S}
\end{equation*}
where the second arrow is given by the natural projection. Assume that the image of $s$ in $X$ lies in the locus where $\sM_{X}$ non-trivial (i.e. is mapped to the relative divisor). Denote by $\delta$ and $\sigma$ the generators of $\overline{\phi_S^\ast(\sM_{X,s})}$ and $\overline{\sN}_{S,s}$ respectively. Then the above composition restricts on the stalks at $s$ to $\delta\mapsto c\cdot \sigma$ for some integer $c\in\N$. 

\begin{defn}
The integer $c$ is called the {\em contact order} of the standard log point $\phi$ over the geometric point $s$. When the image of $s$ in $X$ lies in the locus with $\sM_{X}$ trivial, we define the contact order $c=0$. 
\end{defn}

This definition corresponds exactly to the contact order of the marked points of a minimal log stable map.  The following lemma shows that the contact order remains constant along a family.  This fact provides our stratification.

\begin{lem}\label{lem:contact}
In a minimal family of standard log points in $(X,\sM_{X})$ over a scheme $S$, the points whose fibers have fixed contact order $c$ form an open subscheme of $S$.
\end{lem}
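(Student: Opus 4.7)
The plan is to prove the stronger assertion that the contact order function $c:|S|\to\N$ is \emph{locally constant} in the étale topology; each level set $\{s : c(s) = c_0\}$ is then the preimage of a point under a locally constant map, hence open. Fix a geometric point $\bar s\in S$ and set $c_0=c(\bar s)$. If $\phi_S(\bar s)\notin D$, the open subscheme $\phi_S^{-1}(X\setminus D)$ is a neighborhood of $\bar s$ on which $c$ is identically $0 = c_0$, so I may assume $\phi_S(\bar s)\in D$.

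Pass to an étale neighborhood $U\to S$ of $\bar s$ on which the relevant data admits global charts: by minimality (Proposition \ref{prop:min-cri}), the characteristic $\overline{\sM}_{S,\bar s}$ is either $0$ or $\N$ and lifts to a chart $\overline{\sM}_{S,\bar s}\to \sM_S|_U$; the generic rank~$1$ DF structure $\sN_S$ admits a chart $\N_\sigma \to \sN_S|_U$, $\sigma\mapsto \tilde\sigma$, with $\beta(\tilde\sigma)=0\in\sO_U$ because the section of the line bundle defining $\sN_S$ is identically zero; and since $\sM_X$ is DF(1), near $\phi_S(\bar s)$ there is a chart $\N_\delta\to \sM_X$, $\delta\mapsto x$, with $x$ a local defining function of $D$, which pulls back to $\N_\delta\to \phi_S^*\sM_X|_U$, $\delta\mapsto \phi_S^\#(x)$. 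After possibly shrinking $U$, the log morphism $\phi_S^\flat$ is then represented on charts by $\delta\mapsto a e+c_0\sigma$ for some $a\in\{0,1\}$, lifting to an equality
\[
\phi_S^\flat(\phi_S^*\delta)\ =\ u\cdot \tilde e^{\,a}\,\tilde\sigma^{\,c_0}\qquad\text{in}\qquad \sM_S\oplus_{\sO_U^*}\sN_S
\]
for a unit $u\in\sO_U^*$. Applying the structure map to $\sO_U$ yields the identity $\phi_S^\#(x) = u\cdot \alpha(\tilde e)^a\cdot \beta(\tilde\sigma)^{c_0}$.

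If $c_0\geq 1$, the factor $\beta(\tilde\sigma)^{c_0}$ vanishes, so $\phi_S^\#(x)\equiv 0$ on $U$, i.e.\ $\phi_S(U)\subseteq D$; reading the same chart at any $\bar s'\in U$ then gives $(\phi_S^*\overline{\sM}_X)_{\bar s'}=\N_\delta$ and $c(\bar s')=c_0$. If $c_0=0$, the chart projects to zero in $\overline{\sN}_S$ at every stalk, so together with the convention $c=0$ outside $\phi_S^{-1}(D)$ one concludes $c\equiv 0$ on $U$. Either way $c\equiv c_0$ on $U$. The one nontrivial step is the passage from a stalkwise statement to an open statement in the case $c_0\geq 1$: a priori the contact order could jump along a generization that leaves $\phi_S^{-1}(D)$, and it is exactly the identity $\beta(\tilde\sigma)^{c_0}=0$, reflecting the fact that $\sN_S$ is a \emph{generic} rank~$1$ DF log structure, that rules this out by forcing the entire étale neighborhood into $D$. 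Everything else reduces to routine bookkeeping with charts of characteristic monoids.
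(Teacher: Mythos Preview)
Your argument is correct and in fact more detailed than the paper's. The paper's own proof is a two-line appeal to \cite[Lemma~3.5]{O03}: it observes that the composition $(\phi_S^*\overline{\sM}_X)_s\to\overline{\sM}'_{S,s}/\overline{\sM}_{S,s}\cong\overline{\sN}_{S,s}$ sends $\delta\mapsto c\sigma$ and then asserts that this relation on characteristics ``generalizes to nearby points of $s$'' by Olsson's lemma on constructibility and generization of characteristic sheaves. You instead work directly with charts and extract the relation $\phi_S^{\#}(x)=u\cdot\alpha(\tilde e)^a\cdot\beta(\tilde\sigma)^{c_0}$ in $\sO_U$, using the vanishing $\beta(\tilde\sigma)=0$ (coming from the \emph{generic} rank~$1$ hypothesis on $\sN_S$) to force $\phi_S(U)\subseteq D$ when $c_0\geq 1$. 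This makes explicit exactly which feature of a family of \emph{standard} log points prevents the contact order from dropping under generization, whereas the paper leaves that mechanism inside the citation. Your approach is self-contained and arguably clearer; the paper's is shorter but relies on the reader knowing what Olsson's lemma says and why it applies here. One small citation slip: the fact that $\overline{\sM}_{S,\bar s}\in\{0,\N\}$ follows directly from the surjectivity in Definition~\ref{defn:min}, not from Proposition~\ref{prop:min-cri}.
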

\begin{proof}
Again, denote our family by $\phi_S:(S,\sM_S')\to (X,\sM_X)$ over $(S,\sM_S)$.  Assume that the fiber over the point $s\in S$ has contact order $c$, and $\sM_{S}'\cong \sM_{S}\oplus\sN_{S}$.  As above, we have the composition $\phi^{*}(\overline\sM_{X})_{s}\to \sM_{S,s}'/\sM_{S,s}\cong \overline{\sN}_{S}$ is given by $\delta\mapsto c\cdot \sigma$.  By \cite{O03} Lemma 3.5, this generalizes to nearby points of $s$. Thus fibers having contact order $c$ is an open condition on the base. 
\end{proof}

\begin{prop}
We have the following disjoint union of stacks
\[\wedge X = \coprod_{c\in \N}\wedge_{c} X\]
where $\wedge_{c} X$ is the stack parameterizing minimal families of standard log points in $(X,\sM_{X})$ with contact order $c$.
\end{prop}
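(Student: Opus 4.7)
The plan is to deduce the disjoint-union decomposition directly from Lemma \ref{lem:contact}, together with the observation that at each geometric point the contact order is a uniquely defined natural number. First, I would define $\wedge_{c} X \subset \wedge X$ as the full subcategory whose objects are those minimal families of standard log points in $(X,\sM_{X})$ over $S$ such that every geometric fiber has contact order equal to $c$. This is clearly closed under pullback, so $\wedge_{c} X$ is a subcategory fibered in groupoids over $\mathfrak{Sch}$.

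Next, I would verify that each $\wedge_{c} X$ is an open substack of $\wedge X$. For a test scheme $S$ with a morphism $S \to \wedge X$, corresponding to a minimal family $\phi_S:(S,\sM_S\oplus\sN_S)\to(X,\sM_X)$ over $(S,\sM_S)$, write $S_{c}\subset S$ for the locus of geometric points at which the contact order is $c$. Lemma \ref{lem:contact} asserts that $S_{c}$ is open in $S$. Since the contact order is well-defined as an element of $\N$ at every geometric point of $S$ (by the definition preceding the lemma), the subsets $S_{c}$ are pairwise disjoint and exhaust $S$ set-theoretically, i.e.\ $S = \bigsqcup_{c\in\N} S_{c}$.

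Now for each fixed $c\in\N$, the complement $S\setminus S_{c} = \bigcup_{c'\neq c} S_{c'}$ is a union of open subsets, hence open. Thus $S_{c}$ is both open and closed in $S$, and the decomposition $S = \coprod_{c\in \N} S_{c}$ is a decomposition into open and closed subschemes. Note that at most finitely many $S_{c}$ are nonempty on any quasi-compact $S$, so this presents no issue. Since this decomposition is compatible with pullback along any morphism $S'\to S$ (contact order is defined fiberwise at geometric points), it globalizes to show that $\wedge X$ is the disjoint union of the substacks $\wedge_{c} X$, each of which is an open and closed substack of $\wedge X$, hence itself algebraic.

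The only potential obstacle is making sure that ``disjoint union'' has the intended meaning when infinitely many $c$ appear: one should note that on any affine test scheme only finitely many strata are nonempty (because the contact order function on the finite set of generic points of irreducible components is bounded), so the coproduct on the right is well-defined and agrees with $\wedge X$ as a stack over $\mathfrak{Sch}$. This completes the argument.
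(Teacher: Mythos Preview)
Your proposal is correct and follows essentially the same approach as the paper. The paper's proof is simply a terser packaging of your argument: it observes that Lemma~\ref{lem:contact} makes the contact order a continuous map $\wedge X \to \N$ with $\N$ discrete, whose fibers are then the open-and-closed substacks $\wedge_c X$; your version unpacks this into the explicit open--closed decomposition on each test scheme. One small remark: your justification that only finitely many $S_c$ are nonempty on a quasi-compact $S$ via ``generic points of irreducible components'' tacitly assumes finitely many components, but the conclusion follows more directly from the fact that the disjoint opens $S_c$ cover the quasi-compact $S$.
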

\begin{proof}
By Lemma \ref{lem:contact}, the value of the contact order along the fiber of a family defines a continuous map $\wedge X\to \N$.  Since $\N$ is discrete, this map prescribes the stratification $\wedge X = \coprod_{c\in \N}\wedge_{c} X$.  The stack $\wedge_cX$ is exactly the pre-image of $c$.
\end{proof}

In order to make use of $\wedge X$ as an evaluation space for logarithmic stable maps, we need to understand the basic structure of the components $\wedge_cX$.  We end this section with such an analysis, beginning with the following easy proposition.

\begin{prop}
$\wedge_{0}X = X\times B\G_{m}$
\end{prop}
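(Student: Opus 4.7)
The plan is to exhibit mutually inverse equivalences between $\wedge_0 X$ and $X \times B\G_m$. Unpacking definitions, an object of $\wedge_0 X(S)$ amounts to an underlying morphism $f: S \to X$, a pair of log structures $\sN_S$ and $\sN$ on $S$ (with $\sN$ a DF(1) log structure with zero section, equivalently a standard log point structure), and a log morphism $\phi_S: (S, \sN_S \oplus \sN) \to (X, \sM_X)$ over $f$, subject to minimality and contact order zero at every geometric point.

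The key step is to show that $\sN_S$ is canonically isomorphic to $f^* \sM_X$. I would argue pointwise. Let $s \in S$ be a geometric point. If $f(s) \notin D$, then $f^* \overline{\sM}_X$ vanishes at $s$ and minimality forces $\overline{\sN}_{S,s} = 0$. If $f(s) \in D$, let $\delta$ generate $f^* \overline{\sM}_{X,s} \cong \N$; contact order zero gives $\phi_S^\flat(\delta) = (e, 0) \in \overline{\sN}_{S,s} \oplus \overline{\sN}_s$, and minimality requires $e$ to generate $\overline{\sN}_{S,s}$. The one subtlety is ruling out $e = 0$: lift $\delta$ to $\tilde\delta \in f^* \sM_X$ with $\exp(\tilde\delta) = f^* t$ for $t$ the local section defining $D$; the vanishing of $e$ would render $\phi_S^\flat(\tilde\delta) \in \sN_S \oplus \sN$ a unit, whence $f^* t$ would be invertible at $s$, contradicting $f(s) \in D$. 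Thus $e \neq 0$, and $f^* \overline{\sM}_X \to \overline{\sN}_S$ is an isomorphism at every geometric point, giving $\sN_S \cong f^* \sM_X$ globally.

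With this identification the forward functor $\wedge_0 X \to X \times B\G_m$ sends the family to the pair $(f, \sN)$, where $\sN$ corresponds to an $S$-point of $B\G_m$ via the classification recalled in Section \ref{ss:families}. The inverse functor sends $(f: S \to X,\, \sN)$ to the family with $\sN_S := f^* \sM_X$, and $\phi_S$ given by $f$ on underlying schemes together with the natural inclusion $f^* \sM_X \hookrightarrow f^* \sM_X \oplus \sN$ on log structures. This family is tautologically minimal---the map on characteristics is an isomorphism onto the $\overline{\sN}_S$-summand---and has contact order zero since its $\overline{\sN}$-component is trivial.

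The main obstacle is the pointwise argument forcing $e \neq 0$ at divisorial points, which pins down $\sN_S$ up to canonical isomorphism; once that identification is in hand, checking that the two constructions are mutually inverse, compatible with morphisms, and induce an equivalence on $2$-cells, is routine bookkeeping.
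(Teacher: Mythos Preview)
Your proof is correct and follows the same approach as the paper: both arguments reduce to showing that $\sN_S \cong \phi_S^*\sM_X$ (equivalently $f^*\sM_X$), after which the identification with $X \times B\G_m$ is immediate. The paper simply asserts this isomorphism in one line, whereas you supply the pointwise verification---including the nontrivial check that $e \neq 0$ at divisorial points---so your argument is a fleshed-out version of the paper's sketch rather than a different route.
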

\begin{proof}
In a family of standard log points in $(X,\sM_{X})$ over a scheme $S$ with contact order $0$, we have $\sM_{S}\cong \phi_S^{*}\sM_{X}$. The log structure $\sN_{S}$ is given by a map $S\to B\G_{m}$, thus such families are equivalent to a map $S\to X\times B\G_{m}$. 
\end{proof}

\begin{cor}\label{cor:triv}
If the target $(X,\sM_X)$ comes equipped with the trivial log strucure $\sM_X\cong \sO_X^\ast$, then $\wedge X = X\times B\G_{m}$.
\end{cor}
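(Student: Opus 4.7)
The plan is to deduce this immediately from the contact order decomposition $\wedge X = \coprod_{c\in\N}\wedge_c X$ together with the preceding Proposition identifying $\wedge_0 X \cong X\times B\G_m$. Everything reduces to showing that when $\sM_X \cong \sO_X^\ast$, only contact order $c=0$ can occur, so the disjoint union collapses to its degree zero piece.

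First I would observe that the triviality hypothesis $\sM_X \cong \sO_X^\ast$ means $\overline{\sM}_X = 0$ globally, and in particular the locus $D \subset X$ along which $\sM_X$ is nontrivial is empty. Now fix any object of $\wedge X$ given by a minimal family $\phi_S:(S,\sM_S \oplus \sN_S)\to (X,\sM_X)$ over $(S,\sM_S)$, and let $s$ be any geometric point of $S$. Since $D = \varnothing$, the image $\phi_S(s)$ automatically lies in the locus where $\sM_X$ is trivial, so by the definition of contact order (the case appearing immediately before Lemma \ref{lem:contact}) the contact order of $\phi_S$ at $s$ equals $0$.

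Since this holds at every geometric point, the family lies entirely in the open substack $\wedge_0 X$. Hence $\wedge X = \wedge_0 X$ as substacks of the disjoint union, and by the preceding proposition $\wedge_0 X = X\times B\G_m$, giving the desired identification. The main obstacle is conceptually trivial — the whole content of the corollary is simply that the triviality of $\sM_X$ forces the relative divisor to be empty — so no serious step remains once the contact order decomposition and the identification of $\wedge_0 X$ are in hand.
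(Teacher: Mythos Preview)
Your proof is correct and follows exactly the paper's approach: the paper's entire proof is the single line ``In this case $\wedge X =\wedge_{0} X$,'' relying on the preceding proposition for $\wedge_0 X = X\times B\G_m$. You have simply spelled out the (obvious) reason behind that line, namely that $D=\varnothing$ forces every geometric point to have contact order $0$.
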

\begin{proof}
In this case $\wedge X =\wedge_{0} X$.
\end{proof}

Recall from section \ref{ss:families} the interpretation of $\sA$ as the stack associating to a scheme $S$ the groupoid of pairs $(L,s)$ where $L$ is a line bundle and $s\in H^{0}(L^\vee)$. The substack $B\G_m\subset\sA$  associates to $S$ the groupoid of pairs $(L,0)$, where $L$ is a line bundle, and $0$ is the zero section. 

\begin{defn}\label{defn:tangency-map}
Define a map
\begin{equation}
\nu_{c} : \sA\times B\G_{m} \to B\G_{m} \ \ \ \big((L,s),(L',0)\big)\mapsto (L\otimes L'^{\otimes c}, 0).
\end{equation}
This map is defined fiber-wise, where $L$ and $L'$ are line bundles over the base $S$.
\end{defn}

\begin{rem}
We should, in fact, view the map $\nu_{c}$ as sending sections $s$ and $0$ to $s\cdot 0^{\otimes c}$.
\end{rem}

\begin{lem}\label{lem:evaluation-map}
The map $\nu_{c}$ induces a morphism of log stacks with their natural log structures.
\end{lem}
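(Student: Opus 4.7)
The plan is to construct the structure morphism $\nu_c^\flat$ at the level of chart monoids and verify compatibility with the structure maps into $\sO$. Recall that $\sM_{\sA}$ is the DF(1) log structure associated to the universal pair $(L,s)$ on $\sA$, that $\sM_{B\G_m}$ is the DF(1) structure associated to the universal pair $(L',0)$ on $B\G_m$, and that the natural log structure on $\sA\times B\G_m$ is $\sM_{\sA}\oplus\sM_{B\G_m}$, the DF(2) log structure associated to the pair $\bigl((L,s),(L',0)\bigr)$. Since $\nu_c$ pulls the universal line-bundle-with-section on $B\G_m$ back to $(L\otimes L'^{\otimes c},\,0)$, the pullback $\nu_c^\ast\sM_{B\G_m}$ is the DF(1) log structure on $\sA\times B\G_m$ associated to that pair.

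I would then work \'etale-locally by choosing trivialisations of $L$ and $L'$ so that $s$ becomes a function $f$ and the zero section of $L'$ is $0\in\sO$. In these charts, $\sM_{\sA\times B\G_m}$ is the log structure associated to $\N^2\to\sO$, $(a,b)\mapsto f^a\cdot 0^b$, while $\nu_c^\ast\sM_{B\G_m}$ is associated to $\N\to\sO$, $1\mapsto f\cdot 0^c$. The natural candidate for $\nu_c^\flat$ is induced by the monoid homomorphism $\N\to\N^2$, $1\mapsto (1,c)$: compatibility with the structure maps into $\sO$ is immediate, since the generator of $\N$ is sent to $f\cdot 0^c$ on both sides. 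A different choice of trivialisation rescales $f$ and $0$ by units of $\sO$ that get absorbed consistently into the $\sO^\ast$ parts of the associated log structures on both sides, so the induced morphism on the associated log structures is independent of the trivialisation and the local constructions glue to a global $\nu_c^\flat:\nu_c^\ast\sM_{B\G_m}\to\sM_{\sA\times B\G_m}$. This is the desired lift of $\nu_c$ to a morphism of log stacks.

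A coordinate-free way to phrase this is via the monoidal-stack structure on $\sA$: tensor product of $\sO^\ast$-torsors together with multiplication of sections equips $\sA$ with a commutative monoid structure, and the multiplication map $\mu:\sA\times\sA\to\sA$ is itself a morphism of log stacks, corresponding to the chart map $\N\to\N\oplus\N$, $1\mapsto(1,1)$. Writing $(\cdot)^c$ for the $c$-th power endomorphism of $B\G_m$ (which corresponds to $\N\to\N$, $1\mapsto c$), the map $\nu_c$ factors as
\[
\sA\times B\G_m\xrightarrow{\,\mathrm{id}\times(\cdot)^c\,}\sA\times B\G_m\hookrightarrow\sA\times\sA\xrightarrow{\;\mu\;}\sA,
\]
landing in $B\G_m\subset\sA$ because the $B\G_m$ factor contributes a zero section. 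Each step is a morphism of log stacks with the natural DF log structures, hence so is $\nu_c$. The only organisational subtlety is agreeing on the convention at $c=0$ so that $\nu_0$ really does take values in $B\G_m$ (i.e.\ that $s\cdot 0^{\otimes 0}$ is read as the zero section of $L$); no genuinely deep difficulty arises.
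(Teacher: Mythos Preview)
Your first argument is essentially the paper's proof: the paper also constructs $\nu_c^\flat:\nu_c^\ast\sM_{B\G_m}\to\sM_{\sA}\oplus\sM_{B\G_m}$ by the chart formula $\delta\mapsto e+c\cdot\sigma$ (your $\N\to\N^2$, $1\mapsto(1,c)$), appealing to the dictionary between line-bundle-with-section data and $\sO^\ast$-torsors rather than choosing explicit local trivialisations as you do; the content is the same, and you have simply supplied more detail about gluing.

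Your second, coordinate-free argument via the monoid structure on $\sA$ and the factorisation $\nu_c=\mu\circ(\mathrm{id}\times(\cdot)^c)$ is not in the paper and is a pleasant alternative. One caution on your closing remark: the $c=0$ case is not merely a convention issue. If one insists (as Definition~\ref{defn:tangency-map} does) that the target section is $0$, then the compatibility $\exp\circ\nu_0^\flat=\exp$ would force $0=s$ at the generator, which fails over the open locus of $\sA$; if instead one reads $s\cdot 0^{\otimes 0}=s$, the map no longer lands in $B\G_m$. In the paper this causes no trouble because $\nu_c$ is only used for $c>0$ (the case $c=0$ is handled separately by $\wedge_0 X\cong X\times B\G_m$), so the lemma is effectively a statement about $c\geq 1$.
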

\begin{proof}
By the discussion in Example \ref{ss:families}, the map on the level of line bundles with sections induces a map of corresponding $\G_{m}$-torsors, hence a map of sheaves of monoids $\nu_{c}^{\flat}:\nu_{c}^{*}\sM_{B\G_{m}}\to \sM_{\sA}\oplus\sM_{B\G_{m}}$ given by $\delta\mapsto e + c\cdot \sigma$. Here $\delta, e$, and $\sigma$ are the generators of $\nu_{c}^{*}\sM_{B\G_{m}}, \sM_{\sA}$, and $\sM_{B\G_{m}}$ respectively. One can check that $\nu_{c}^{\flat}$ gives a map of the corresponding log structures.
\end{proof}

Let $D\subset X$ be the relative divisor on $X$ corresponding to $\sM_X$, i.e. the locus in $X$ where $\sM_X$ is non-trivial.  The locus $D$ has a natural closed scheme structure given locally by the generator of $\sM_{X}$. Let $\sM_{D}=\sM_{X}|_{D}$ denote the log structure on $D$.  This log structure is naturally isomorphic to the log structure induced by $\sO(-D)|_{D}$.  %the normal bundle $N_{D/X}$ of $D$.
Since $\sM_D$ is itself also DF(1) (in fact, it is generically rank 1), this induces a natural map $f:(D,\sM_D) \to (B\G_{m},\sM_{B\G_m})$. The cartesian diagram of log stacks
\begin{equation}\label{diag:c-log-evaluation}
\xymatrix{
(I_{c},\sM_{I_{c}}') \ar[r] \ar[d] & (\sA\times B\G_{m},\sM_{\sA\times B\G_{m}}) \ar[d]^{\nu_{c}} \\
(D,\sM_{D}) \ar[r]_f&  (B\G_{m},\sM_{B\G_{m}}).
}
\end{equation}
gives a fiber product description of the component $\wedge_c X$.

\begin{prop}
$\wedge_{c} X \cong I_{c}$
\end{prop}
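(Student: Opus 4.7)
The plan is to unwind both sides as categories fibered over schemes and show directly that they classify the same data. I assume $c \geq 1$; the case $c = 0$ is handled by the preceding proposition.

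For an $S$-point of $I_c$, the defining cartesian square provides: a scheme map $\bar\phi: S \to D$; DF(1) data $(L_\sM, s_\sM)$ and $(L_\sN, 0)$ from the factor $\sA \times B\G_m$; and a 2-isomorphism between the two composed maps to $B\G_m$. Via Definition \ref{defn:tangency-map} and the dictionary of \S\ref{ss:families}, this 2-isomorphism amounts to a line bundle isomorphism $L_\sM \otimes L_\sN^{\otimes c} \cong \bar\phi^*(\sO_X(-D)|_D)$. For $c \geq 1$ the induced sections vanish on both sides: the left via $s_\sM \cdot 0^{\otimes c}$, the right because the canonical section of $\sO_X(-D)$ is the inclusion of the ideal sheaf, which is identically zero after restriction to $D$.

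For an $S$-point of $\wedge_c X$, Proposition \ref{lem:family-wedgex} gives a minimal family $\phi_S: (S, \sM_S \oplus \sN) \to (X, \sM_X)$ of standard log points with contact order $c$, so $\phi_S^\flat$ carries the generator $\delta$ of $\phi_S^*\overline{\sM}_X$ to $e + c\sigma$, with $e$ generating $\overline{\sM}_S$. Since $\sigma$ corresponds to the zero section of $B\G_m$ and $c \geq 1$, the image of $\delta$ in $\sO_S$ vanishes, so $\bar\phi_S$ factors through $D$. The Deligne--Faltings dictionary then converts $(\sM_S, \sN)$ into pairs $((L_\sM, s_\sM), (L_\sN, 0))$ and converts $\phi_S^\flat$ into exactly the isomorphism $L_\sM \otimes L_\sN^{\otimes c} \cong \bar\phi^*(\sO_X(-D)|_D)$.

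These two unwindings record identical data, so the assignment is a bijection on $S$-points, natural in $S$; on morphisms, each side consists of isomorphisms of the respective diagrams of log data, and the bijection extends accordingly. This yields an equivalence of stacks $\wedge_c X \cong I_c$. The main obstacle is verifying that the minimality condition of Definition \ref{defn:min} on the $\wedge_c X$-side imposes no constraint beyond the fiber product condition defining $I_c$. Minimality forces $e$ to generate $\overline{\sM}_S$, which is equivalent to the relation $L_\sM \cong \bar\phi^*(\sO_X(-D)|_D) \otimes L_\sN^{-c}$, precisely the cartesian constraint. Beyond this, the argument is a mechanical translation between the log-theoretic and line-bundle-theoretic descriptions.
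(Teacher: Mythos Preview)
Your argument is correct and follows essentially the same route as the paper: both establish the equivalence by matching $S$-points, the paper phrasing this through the universal property of the fiber product $I_c$ (building the universal family on $I_c$, checking minimality via the explicit form of $\nu_c^\flat$, and then producing a map $S\to I_c$ from any minimal family), while you translate both sides into the line-bundle-with-section dictionary and compare directly.

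Two small points worth tightening. First, the reference to Proposition~\ref{lem:family-wedgex} is off: that proposition concerns log maps $(S,\sM_S)\to(\wedge X,\sM_{\wedge X})$, whereas for $S$-points of $\wedge_c X$ over $\mathfrak{Sch}$ you want the definition of $\wedge X$ itself (minimal families over $S$). Second, your last paragraph conflates two separate things. The line-bundle relation $L_\sM\otimes L_\sN^{\otimes c}\cong \bar\phi^*(\sO_X(-D)|_D)$ is not a reformulation of minimality; it is the torsor-level content of $\phi_S^\flat$ and is present in any family. What minimality actually contributes is that $\overline{\sM}_S$ is generated by the image of $\delta$, hence is $\N$ or $0$, so $\sM_S$ is genuinely a DF(1) structure and therefore arises from a map $S\to\sA$. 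This is exactly the point the paper isolates when it invokes the strictness of the top arrow in diagram~(\ref{diag:c-log-evaluation}) together with the description of $\nu_c^\flat$. With that clarification your bijection goes through cleanly.
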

\begin{proof}
We use the universal property of the above cartesian diagram. In fact, $\sM_{\wedge X}$ is given by the composition $I_{c}\to \sA\times B\G_{m} \to \sA$ and $\sN_{\wedge X}$ is given by the composition $I_{c}\to \sA\times B\G_{m} \to B\G_{m}$ (the second arrow is the projection, not $\nu_{c}$).  Denote the log structures on $I_c$ corresponding to these compositions by $\sM_{I_{c}}$ and $\sN_{I_{c}}$ respectively. Then we have $\sM_{I_{c}}'=\sM_{I_{c}}\oplus_{\sO_{I_{c}}} \sN_{I_{c}}$. Consider the diagram:
\begin{equation}\label{diag:univ-min-pt}
\xymatrix{
(I_{c},\sM_{I_{c}}') \ar[rr] \ar[d] && (D,\sM_{D}) \ar@{^{(}->}[rr]^{strict} && (X,\sM_{X})\\
(I_{c},\sM_{I_{c}}).
}
\end{equation}
This induces a family of standard log points in $(X,\sM_{X})$. By the strictness of $f'$ in (\ref{diag:c-log-evaluation}) and the description of $\nu_{c}^{\flat}$ in the proof of Lemma \ref{lem:evaluation-map}, the above diagram in fact gives a minimal family of standard log points in $(X,\sM_{X})$ with contact order $c$.

Consider any family of minimal log points in $(X,\sM_{X})$ with contact order $c$:
\begin{equation}\label{diag:test-log-pt}
\xymatrix{
(S,\sM_{S}') \ar[rr]^{f_S} \ar[d] && (X,\sM_{X}) \\
(S,\sM_{S}).
}
\end{equation}
The log structure $\sM_{S}'$ induces a strict log map $(S,\sM_{S}')\to (\sA\times B\G_{m},\sM_{\sA\times B\G_{m}})$. Composing with $\nu_{c}$ determines a map $\phi:(S,\sM_{S}')\to (B\G_{m},\sM_{B\G_{m}})$. On the other hand, since $c>0$, the map $f_{S}$ factor through $(D,\sM_{D})$. This induces another map $\phi':(S,\sM_{S}')\to (B\G_{m},\sM_{B\G_{m}})$ which coincides with $\phi$ by the description of $\nu_{c}$. Thus, we obtain a unique map $g:S\to I_{c}$, such that (\ref{diag:test-log-pt}) is the pull-back of (\ref{diag:univ-min-pt}). The statement follows from the moduli interpretation of $\wedge_{c}X$.
\end{proof}

\subsection{Cohomology of $\wedge X$}  Recall from \cite{E10} that the cohomology ring of a quotient stack is given by the equivariant cohomology of the corresponding group action.  The cohomology of the components $\wedge_c X$ can be described combinatorially in this way.

Let $N_{D/X}$ be the normal bundle of $D$ in $X$, and let $\sN_{D/X}$ be the DF($1$) log structure over $D$ induced by $N_{D/X}$ with the zero section.  The diagram of log stacks above induces the following cartesian diagram of stacks:
\[
\xymatrix{
\wedge_{c} X \ar[r] \ar[d] & \sA\times B\G_{m} \ar[d]^{\nu_{c}} \\
D\ar[r]_f&  B\G_{m}.
}
\]
The map $f$, as above, is induced by $\sN_{D/X}$.  The stack $\sA\times B\G_m$ can be written as the quotient $[\A^1/\G_m^2]$ where the action of the first component is given by multiplication and is trivial for the second component.  The corresponding weights of these actions give respective equivariant parameters $s$ and $t$.  Let $\gamma$ denote the first chern class $c_1(N_{D/X}) \in H^\ast(D)_\Q$.  The cohomology of $\wedge_c X$ is described in the following proposition.

\begin{prop} $H^\ast(\wedge_c X)_\Q = H^\ast(D)_\Q[s,t]/(s+ct-\gamma)$
\end{prop}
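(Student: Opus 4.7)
The plan is to leverage the cartesian presentation of $\wedge_c X$ furnished by the preceding proposition (which applies for $c \geq 1$; the $c=0$ case has already been handled) and compute its rational cohomology via a Tor-independence base change formula.

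The first step is to compute the cohomology of the three remaining vertices of the cartesian square. The quotient stack presentations $B\G_m = [\mathrm{pt}/\G_m]$ and $\sA \times B\G_m = [\A^1/\G_m^2]$ together with the Borel construction yield $H^*(B\G_m)_\Q = \Q[u]$ and $H^*(\sA \times B\G_m)_\Q = \Q[s,t]$, where $u$, $s$, $t$ are the equivariant parameters for the tautological $\G_m$-characters. The induced pullback maps are then readily identified: since $\nu_c$ classifies $L \otimes (L')^{\otimes c}$,
$$\nu_c^*(u) \;=\; c_1(L) + c\,c_1(L') \;=\; s + ct,$$
while $f$ classifies the normal bundle $N_{D/X}$, so $f^*(u) = c_1(N_{D/X}) = \gamma$.

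The crux is the base change identification
$$H^*(\wedge_c X)_\Q \;\cong\; H^*(\sA \times B\G_m)_\Q \otimes_{H^*(B\G_m)_\Q} H^*(D)_\Q.$$
To establish this I would verify Tor-independence by exhibiting $\Q[s,t]$ as a free module over $\Q[u]$ via $u \mapsto s + ct$. Indeed, the linear substitution $s \mapsto u - ct$ identifies $\Q[s,t]$ with $\Q[u,t]$, visibly free over $\Q[u]$ with basis $\{t^i\}_{i \geq 0}$. Tor-independence then collapses the Eilenberg--Moore spectral sequence for cohomology of the fiber product to the $E_2$-page. Evaluating the resulting tensor product (imposing $u = s+ct$ on the left factor and $u = \gamma$ on the right) introduces the single relation $s + ct - \gamma = 0$, giving precisely the claimed presentation $H^*(D)_\Q[s,t]/(s+ct-\gamma)$.

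The principal obstacle is justifying the base change formula rigorously in the setting of rational cohomology of Artin stacks. The Tor-independence itself is algebraically immediate, but invoking Eilenberg--Moore in this category requires some care. A more concrete alternative is to use the automorphism $(g_1,g_2) \mapsto (g_1 g_2^c, g_2)$ of $\G_m^2$ to realize $\nu_c$ as a projection fitting into the short exact sequence $1 \to \G_m \to \G_m^2 \to \G_m \to 1$, and then apply the Leray spectral sequence of the induced fibration of classifying spaces; this spectral sequence degenerates by parity, since every cohomology ring involved is concentrated in even degrees and polynomial.
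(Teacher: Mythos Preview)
Your proposal is correct and follows the same skeleton as the paper: identify $H^\ast(\sA\times B\G_m)_\Q$ with $\Q[s,t]$ via equivariant cohomology, compute $\nu_c^\ast(u)=s+ct$ from the description $\delta\mapsto e+c\sigma$, and read off the relation from the cartesian square. The paper's own proof is extremely terse---after recording these ingredients it simply asserts ``the result follows''---so your Tor-independence/Eilenberg--Moore justification (and the alternative Leray argument via the automorphism of $\G_m^2$) supplies exactly the rigor the paper leaves implicit.
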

\begin{proof}
The torus equivariant cohomology of $\A^1$ given by the above action is simply $H^\ast(B\G_m^2)$.  This is the polynomial ring in the variables $s$ and $t$ representing the weights of the action.  Over a base $S$, let $\sM_S\oplus\sM_S'$ and $\sN_S$ denote the log structures corresponding to $S\to\sA\times B\G_{m}$ and $S\to B\G_m$ respectively.  Set local generators $e\in\overline{\sM}_S$, $\sigma\in\overline{\sM}_S'$ and $\delta\in\overline{\sN}_S$. The map $\nu_c$ corresponds to sending $\delta$ to $e+c\sigma$.  %Over $D$, the log structure $\sM_D$ is the log structure $\sN_{D/X}$ determined by the normal bundle, thus $\nu_c(\delta)$ maps to $\gamma$ and
The result follows.
\end{proof}

\section{Generalization to fs log schemes} \label{ss:dfncase}  

\subsection{The DF(n) case}
We now consider the more general case of a log-smooth target scheme $X$ endowed with a DF log structure $\sM_X$ of arbitrary rank. Extending the construction of $\wedge X$ to this case provides an evaluation space for minimal logarithmic stable maps as constructed in \cite{AC10} for DF($n$) log schemes.  Following this, we will extend even further in section \ref{ss:fslog} to any fine, saturated target log scheme. 

%\begin{rem}
%The functors $\wedge$ and $\wedge'$ both depend on the log structure $\sM_X$ of the target log scheme.  The notation $\wedge X$ and $\wedge'X$ used thus far is in fact a slight abuse of notation, since it suppresses $\sM_X$.  Throughout section \ref{ss:dfncase} it will be necessary to consider $\wedge X$ with different log structures on $X$ simultaneously, so we include the data of log structures in the notation, writing instead $\wedge(X,\sM_X)$ and $\wedge'(X,\sM_X)$.
%\end{rem}

A DF($n$) log structure $\sM_X$ is globally presented by a morphism $\N^r\to\overline{\sM}_X$ that lifts locally to a chart. Let $i=1,\ldots, n$ index the $r$ copies of $\N$ and $j_i:\N\hookrightarrow\N^r$ the incusion of the $i$-th component. The map 
\[
\N\overset{j_{i}}{\hookrightarrow} \N^{r} \to \overline{\sM}_{X}
\]
induces a DF(1) sub-log structure $\sM_{i}\subset \sM_{X}$, defining DF(1) log schemes $(X_i,\sM_{i})$ for each $i$ (with $X_i=X$). $(X,\sM_X)$ can be written as a fibered product
\[
(X,\sM_{X})= (X_1,\sM_{X_1})\times_{X}\cdots\times_{X}(X_r,\sM_{X_r}), 
\]
which simply says 
\[
(X,\sM_X)=\mathop{\varprojlim}\limits_i (X_i,\sM_{X_i}).
\]
This fact allows us to take advantage of our construction for the $\wedge X_{i}$.

Recall from Corollary \ref{cor:triv} that when $X$ carries the trivial log structure $\sO_X^\ast$, the evaluation stack is isomorphic to $X\times B\G_m$.  The canonical map $(X_i,\sM_{X_i})\to X$ induces a map 
\[(\wedge X_i,\sM_{\wedge X_{i}}) \to (X\times B\G_{m}, \sM_{B\G_{m}}).\]
Consider the fiber product
\[
(\wedge X,\sM_{\wedge X}) := (\wedge X_1,\sM_{\wedge X_1})\times_{(X\times B\G_m,\sM_{B\G_m})}\cdots\times_{(X\times B\G_m,\sM_{B\G_m})}(\wedge X_r,\sM_{\wedge X_r}).
\]
This is equivalent to 
\[(\wedge X,\sM_{\wedge X}) = \mathop{\varprojlim}\limits_i(\wedge X_i,\sM_{\wedge X_i}).\]

\begin{prop}\label{prop:DFn-rep}
The log algebraic stack $(\wedge X,\sM_{\wedge X})$ represents the fibered category $\wedge'X$.
\end{prop}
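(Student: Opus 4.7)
The plan is to exhibit an equivalence between $(\wedge X, \sM_{\wedge X})$, viewed as a fibered category over $\mathfrak{LogSch}^{\text{fs}}$ via its natural log structure, and $\wedge' X$, by leveraging the DF(1) result of Proposition \ref{lem:family-wedgex} factor-by-factor and the limit presentation $(X, \sM_X) = \varprojlim_i (X_i, \sM_{X_i})$.

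Starting from a family of standard log points $\phi: (S, \sM_S \oplus \sN) \to (X, \sM_X)$ over $(S, \sM_S)$, I would push forward along each inclusion $\sM_i \hookrightarrow \sM_X$ to obtain a family $\phi_i: (S, \sM_S \oplus \sN) \to (X_i, \sM_{X_i})$ over $(S, \sM_S)$. By Proposition \ref{lem:family-wedgex}, each $\phi_i$ corresponds to a morphism $(S, \sM_S) \to (\wedge X_i, \sM_{\wedge X_i})$. All of these share the common underlying map $S \to X$ and the common DF(1) factor $\sN$, so their compositions with the structural morphisms $(\wedge X_i, \sM_{\wedge X_i}) \to (X \times B\G_m, \sM_{B\G_m})$ coincide. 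The universal property of the fiber product then produces a single morphism $(S, \sM_S) \to (\wedge X, \sM_{\wedge X})$, defining one direction of the equivalence.

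Conversely, a morphism $(S, \sM_S) \to (\wedge X, \sM_{\wedge X})$ projects to a compatible family of morphisms $(S, \sM_S) \to (\wedge X_i, \sM_{\wedge X_i})$, each yielding via Proposition \ref{lem:family-wedgex} a family $\phi_i: (S, \sM_S \oplus \sN_i) \to (X_i, \sM_{X_i})$. Compatibility over $(X \times B\G_m, \sM_{B\G_m})$ forces the underlying maps $S \to X_i = X$ to coincide and identifies all the DF(1) factors $\sN_i$ with a single $\sN$. Using the limit description of $(X, \sM_X)$ in $\mathfrak{LogSch}^{\text{fs}}$, the $\phi_i$ assemble into a unique morphism $\phi: (S, \sM_S \oplus \sN) \to (X, \sM_X)$. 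Functoriality in $(S, \sM_S)$ and morphisms thereof follows automatically from the corresponding property for each DF(1) evaluation space.

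The main subtlety I expect to confront is ensuring that the source log structure on $S$ for the assembled family $\phi$ is exactly $\sM_S \oplus \sN$ with a single copy of $\sN$, rather than $\sM_S \oplus \sN^{\oplus r}$ obtained by naively combining the DF(1) factors independently. This is precisely what fibering over $(X \times B\G_m, \sM_{B\G_m})$ enforces, since the $B\G_m$ component records the single DF(1) factor $\sN$ universally across all $i$. Once this coherence is verified, combining the DF(1) equivalence from Proposition \ref{lem:family-wedgex} at each factor with the limit presentation of $(X, \sM_X)$ yields the desired equivalence of groupoid fibrations; algebraicity of $\wedge X$ is then immediate from its construction as a fiber product of algebraic stacks.
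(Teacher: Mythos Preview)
Your argument is correct and proceeds by a genuinely different route from the paper's proof. The paper does not argue directly at all: it simply invokes Theorem~2.6 of \cite{AC10} on section categories $\mathrm{Sec}_{C/B}(Z/C)$, identifying the particular choice of $B$, $C$, $W$, and $Z$ that specializes that general theorem to the present statement. Your approach, by contrast, is fully self-contained within the paper: you use the limit presentation $(X,\sM_X)=\varprojlim_i (X_i,\sM_{X_i})$, apply the already-established DF(1) case (Proposition~\ref{lem:family-wedgex}) coordinatewise, and then appeal to the universal property of the fiber product defining $(\wedge X,\sM_{\wedge X})$. The coherence check you flag---that fibering over $(X\times B\G_m,\sM_{B\G_m})$ forces a single DF(1) factor $\sN$ rather than $r$ independent copies---is exactly the point, and you handle it correctly. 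The paper's approach is shorter and situates the result inside a general framework already developed for log stable maps, at the cost of depending on an external reference; your approach is more transparent, makes the role of the fiber product construction explicit, and would allow a reader to verify the claim without consulting \cite{AC10}. One small phrasing remark: ``push forward along the inclusion $\sM_i\hookrightarrow\sM_X$'' should be ``compose with the projection $(X,\sM_X)\to(X_i,\sM_{X_i})$''; the meaning is the same but the latter is cleaner.
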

\begin{proof}
Theorem 2.6 in \cite{AC10} is the main argument used to extend the theory of minimal log stable maps to the DF($n$) case.  Our situation is a special case of this:  for the category $\text{Sec}_{C/B}(Z/C)$ fibered over $\mathfrak{LogSch}^{\text{fs}}_{(B,\sM_B)}$ to which the theorem applies, simply take $B$ to be $X\times B\G_m$, $C$ to be the universal family $(X\times B\G_m, \sM_{B\G_m})$, $W:=(X,\sM_X)\times B$, and $Z$ to be the fiber product of $C$ with $W$ over $(X\times B)\times_{B}B$.
\end{proof}

Using this fibered product construction, we obtain a universal diagram
\begin{equation}\label{diag:univ-DFn}
\xymatrix{
(\wedge X,\sM_{\wedge X}\oplus \sN_{\wedge X}) \ar[d] \ar[rr]^{f} && (X,\sM_X) \\
(\wedge X,\sM_{\wedge X}).
}
\end{equation}

\begin{defn}\label{defn:DFn-minimal}
A family of standard log points in $(X,\sM_X)$ over $(S,\sM_{S})$ is called \emph{minimal} if it is the pull-back of (\ref{diag:univ-DFn}) along a strict map $(S,\sM_{S})\to (\wedge X,\sM_{\wedge X})$.
\end{defn}

\begin{rem}
It is enough to check strictness at geometric points. Hence a family of standard log points in $(X,\sM_X)$ is minimal if and only if each geometric fiber is minimal.
\end{rem}

As in the DF(1) case, we can give a combinatorial description of minimality. 

\begin{prop}
Consider a family
\[
\xymatrix{
(S,\sM_{S}\oplus\sN_{S}) \ar[rr]^{f} \ar[d] && (X,\sM_X)\\
(S,\sM_{S})
}
\]
where $S$ is a geometric point. It is minimal in the sense of Definition \ref{defn:DFn-minimal} if and only if 
\begin{enumerate}
 \item $\overline{\sM_{S}}\cong \N^{m}$ for some $m\leq r$;
 \item for each irreducible element $e\in \overline{\sM_{S}}$, there exists a unique irreducible element $\delta\in \overline{f^{*}\sM_{X}}$ whose image under the composition $\overline{f^{*}\sM_{X}} \to \overline{\sM_{S}}\oplus\overline{\sN_{S}}\to \overline{\sM_{S}}$ is $e$.
\end{enumerate}
\end{prop}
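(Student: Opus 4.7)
The plan is to exploit the fiber product description $(\wedge X,\sM_{\wedge X}) = \mathop{\varprojlim}\limits_i(\wedge X_i,\sM_{\wedge X_i})$ in order to reduce the statement to the DF(1) minimality criterion of Definition~\ref{defn:min}. By Definition~\ref{defn:DFn-minimal}, the family is minimal precisely when its classifying map $(S,\sM_S) \to (\wedge X,\sM_{\wedge X})$ is strict. The universal property of the fiber product decomposes such a strict map into a compatible system of strict maps $(S,\sM_S^{(i)}) \to (\wedge X_i,\sM_{\wedge X_i})$ sharing a common map $(S,\sN_S) \to (X\times B\G_m,\sM_{B\G_m})$ that records the standard log point. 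By Proposition~\ref{lem:family-wedgex}, each component classifies precisely a DF(1)-minimal family of standard log points in $(X_i,\sM_{X_i})$.

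For the forward direction I would assume minimality and invoke Definition~\ref{defn:min} on each $i$-th projected family. Letting $J\subseteq\{1,\ldots,r\}$ index the factors with positive contact order, the DF(1) criterion supplies, for each $i\in J$, an irreducible $e_i\in\overline{\sM_S}$ together with an irreducible $\delta_i\in\overline{f^*\sM_i}\subseteq\overline{f^*\sM_X}$ whose image in $\overline{\sM_S}\oplus\overline{\sN_S}$ is $e_i+c_i\sigma$, projecting onto $e_i$. The saturated pushout that computes the characteristic monoid of $\sM_S$ then yields $\overline{\sM_S}\cong\bigoplus_{i\in J}\N e_i\cong\N^{|J|}$ with $|J|\leq r$, establishing (1). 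The uniqueness asserted in (2) is automatic, since the remaining irreducibles of $\overline{f^*\sM_X}$ come from different DF(1) factors and project to different generators (or to zero).

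Conversely, starting from (1) and (2), I would write $\overline{\sM_S}=\bigoplus_{k=1}^m\N e_k$ and use the local presentation $\overline{\sM_X}\cong\N^r$ to locate each unique preimage $\delta_k$ in exactly one DF(1) subsheaf $\sM_{i(k)}$. Taking $\sM_S^{(i(k))}$ to be the sub-log structure of $\sM_S$ generated by $e_k$, and $\sM_S^{(j)}$ trivial for $j$ outside the image of $i$, condition (2) directly verifies the DF(1) minimality criterion of Definition~\ref{defn:min} for each $j$-th projected family. Proposition~\ref{lem:family-wedgex} packages these into strict classifying maps $(S,\sM_S^{(j)})\to(\wedge X_j,\sM_{\wedge X_j})$, and the universal property of the fiber product assembles them into the required strict map $(S,\sM_S)\to(\wedge X,\sM_{\wedge X})$.

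The main obstacle I anticipate is the saturated pushout bookkeeping identifying the characteristic monoid of the fiber product with $\overline{\sM_S}\oplus\overline{\sN_S}$, especially in correctly handling those trivial-contact factors whose $\sM_S^{(j)}$ vanishes but which still share the common $\sN_S$ coming from $\sM_{B\G_m}$. Once this pushout computation is in hand, the equivalence with conditions (1)--(2) reduces cleanly to applying the DF(1) criterion factor by factor.
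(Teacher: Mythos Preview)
Your proposal is correct and follows essentially the same strategy as the paper: project the family through each $\phi_i:(X,\sM_X)\to(X_i,\sM_{X_i})$, replace each projected family by its DF(1)-minimal model $(S,\sM_i^{\min})$ via Proposition~\ref{prop:min-cri}, and observe that minimality in the sense of Definition~\ref{defn:DFn-minimal} is equivalent to the canonical comparison map $\psi:(S,\sM_S)\to(S,\sM_1^{\min})\times_S\cdots\times_S(S,\sM_r^{\min})$ being an isomorphism; conditions (1)--(2) are then read off from, and conversely force, this isomorphism on characteristics. The only cosmetic difference is that the paper phrases the reduction through $\psi$ and Proposition~\ref{prop:min-cri} rather than through strictness of classifying maps, but your $\sM_S^{(i)}$ are exactly the paper's $\sM_i^{\min}$ and the ``pushout bookkeeping'' you flag is precisely the verification that $\psi$ is an isomorphism on characteristics.
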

\begin{proof}
Denote by $\phi_{i}: (X,\sM_X) \to (X_{i},\sM_{X_i})$ the $i$-th projection. The composition $\phi_{i}\circ f$ produces a family of standard log point in $(X_{i},\sM_{X_i})$. Denote by $f_i$ the minimal family associated to $\phi_{i}\circ f$ provided by Proposition \ref{prop:min-cri}, 
\[
\xymatrix{
(S,\sM^{min}_i\oplus\sN_{S}) \ar[rr]^{f_{i}} \ar[d] && (X_i,\sM_{X_i})\\
(S,\sM^{min}_i),
}
\]
with the natural map $(S,\sM_{S})\to (S,\sM^{min}_{i})$. Since $(\wedge X,\sM_{\wedge X})$ is constructed as a fibered product, by Proposition \ref{prop:DFn-rep} we have a canonical map
\[\psi:(S,\sM_{S}) \to (S,\sM_{1})\times \cdots \times (S,\sM_{r}).\]
Since $S$ is a geometric point, $\psi$ is an isomorphism of the underlying schemes. 

To complete the proof, assume first that the family in the statement of the proposition is minimal. Then $\psi$ is a isomorphism. Since $\sM^{min}_{i}$ is DF(1) log structure for all $i$, condition (1) holds.  Consider the composition $\overline{f^{*}\sM_{X}} \to \overline{\sM_{S}}\oplus\overline{\sN_{S}}\to \overline{\sM_{S}}$. Note that it can be constructed as the product of $\overline{f_{i}^{*}\sM_{X}} \to \overline{\sM^{min}_{i}}\oplus\overline{\sN_{S}}\to \overline{\sM^{min}_{i}}$ for $i=1,2,\cdots r$. Thus condition (2) follows.

For the other direction, assume $\overline{\sM_{S}}$ satisfies the two conditions in the statement. Then one can easily check that $\psi$ induces an isomorphism of characteristics, hence is an isomorphism.
\end{proof}

\subsection{The case of fs log schemes} \label{ss:fslog}
Finally, we consider the case of an arbitrary fine, saturated log
scheme $(X,\sM_{X})$, as in Theorem \ref{thm:main}.

%\begin{prop}
%The fibered category $\wedge'X$ over $\mathfrak{LogSch}^\text{fs}$ is represented by a log algebraic stack $(\wedge X,\sM_{\wedge X})$.
%\end{prop}
\begin{proof}[Proof of Theorem \ref{thm:main}]
The statement is local on $X$, thus we can shrink $X$ and assume that
there is a chart $P\to \sM_{X}$ with $P$ fine and saturated. The
following lemma finishes the proof.  
\end{proof}

\begin{lem}
Assume there is a map $P\to \overline{\sM}_{X}$ from a fine, saturated monoid $P$ that locally lifts to a chart. Then the fibered category $\wedge'X$ is represented by a log stack $(\wedge X,\sM_{\wedge X})$.
\end{lem}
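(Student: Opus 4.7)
The strategy is to reduce the statement to the DF($n$) case established in Proposition \ref{prop:DFn-rep}. Since $P$ is fine, choose generators $p_{1},\ldots,p_{r}$ of $P$ and form the surjective monoid homomorphism $\pi:\N^{r}\twoheadrightarrow P$, with $K:=\ker\bigl(\pi^{\mathrm{gp}}:\Z^{r}\to P^{\mathrm{gp}}\bigr)$ a finitely generated abelian group of relations. The composition $\N^{r}\to P\to \sM_{X}$ is a pre-log structure that locally lifts to a chart, so defines a DF($r$) log structure $\sM_{X}^{\mathrm{DF}}$ on $X$ together with a canonical log morphism $q:(X,\sM_{X}^{\mathrm{DF}})\to (X,\sM_{X})$ over $\mathrm{id}_{X}$. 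At the level of characteristic monoids, a log map into $(X,\sM_{X})$ factors through $q$ precisely when the induced map $\N^{r}\to \overline{\sM_{S}\oplus \sN}$ descends to $P$, equivalently when the image of $K$ in the groupification $\overline{\sM_{S}\oplus \sN}^{\mathrm{gp}}$ vanishes.

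Post-composition with $q$ therefore defines a forgetful morphism of fibered categories $\Psi:\wedge' X\to \wedge'(X,\sM_{X}^{\mathrm{DF}})$, and Proposition \ref{prop:DFn-rep} represents the target by an algebraic log stack $\wedge X^{\mathrm{DF}}$ carrying a universal family of standard log points. I would then identify $\wedge X$ as the substack of $\wedge X^{\mathrm{DF}}$ obtained by imposing that the images of finitely many generators of $K$ vanish in the universal sheaf $\overline{\sM_{\wedge}^{\mathrm{DF}}\oplus \sN_{\wedge}^{\mathrm{DF}}}^{\mathrm{gp}}$. Essential surjectivity follows from the minimal family construction of Proposition \ref{prop:min-cri} applied componentwise, while full faithfulness is inherited from the DF($r$) representability, since morphisms of families of standard log points automatically respect the relations in $K$.

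The principal obstacle is verifying that this factorization-through-$P$ condition cuts out an \emph{algebraic} substack endowed with the correct log-universal property. A clean route that handles this implicitly is to mirror the fiber-product construction of Section \ref{ss:wedgex}, replacing the stack $\sA$ of DF(1) log structures with the fs toric Artin stack $\sA_{P}:=[\mathrm{Spec}\,\C[P]\,/\,\mathrm{Spec}\,\C[P^{\mathrm{gp}}]]$, which parameterizes fs log structures equipped with a chart by $P$ and carries a canonical log structure. Forming the analogous fiber product $(\sA_{P}\times B\G_{m})\times_{\mathcal{T}or_{\C}}\mathcal{T}or_{(X,\sM_{X})}$ and extracting the open minimal substack via the obvious generalization of Proposition \ref{prop:min-open}, the arguments of Sections \ref{ss:minimality}--\ref{ss:wedgex} go through essentially verbatim with $\N$ replaced by $P$, producing the desired log algebraic stack $(\wedge X,\sM_{\wedge X})$.
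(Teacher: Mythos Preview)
Your first approach is exactly the paper's strategy, but the paper packages it more cleanly. Rather than choosing generators and then separately imposing relations, the paper invokes (from Ogus) a presentation $P=\varinjlim(\N^{a}\rightrightarrows\N^{b})$ in one step, deduces $(X,\sM_X)=\varprojlim\bigl((X_{b},\sM_{b})\rightrightarrows(X_{a},\sM_{a})\bigr)$ with $\sM_{a},\sM_{b}$ the induced DF log structures, and then applies the same black-box result (Theorem~2.6 of \cite{AC10}) already used for Proposition~\ref{prop:DFn-rep} to conclude that $\wedge'X$ is represented by the equalizer $\varprojlim\bigl(\wedge(X_{b},\sM_{b})\rightrightarrows\wedge(X_{a},\sM_{a})\bigr)$. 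This sidesteps the obstacle you flag---checking by hand that the vanishing of $K$ carves out an algebraic substack with the right universal property---because the equalizer of algebraic stacks is automatically algebraic, and the section-space theorem handles the log-universal property.

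One small slip: the surjection $\N^{r}\twoheadrightarrow P$ induces a map of log structures $\sM_{X}^{\mathrm{DF}}\to\sM_{X}$, hence the canonical log morphism goes $(X,\sM_{X})\to(X,\sM_{X}^{\mathrm{DF}})$, not the direction you wrote. Your subsequent use of ``post-composition with $q$'' is consistent with the correct direction, so this is only a typo.

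Your second route via $\sA_{P}=[\mathrm{Spec}\,\C[P]/\mathrm{Spec}\,\C[P^{\mathrm{gp}}]]$ is a genuinely different argument not taken in the paper. It is plausible and more intrinsic (no choice of presentation), but you would need to check that the minimal base log structure always arises from a map to $\sA_{P}$---i.e., that the sub-log-structure generated by the image of $\overline{\sM}_{X}$ admits a global $P$-presentation---and to redo the minimality analysis with $\N$ replaced by $P$. The paper's limit argument buys you the result with no new verifications beyond what was already done for the DF case.
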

\begin{proof}
A log structure with this property, called a \emph{generalized DF log
  structure}, is shown in \cite{AC10} to have a useful structure which
we describe briefly here. By 2.1.9(7) in Chapter 1 of \cite{OgusText},
we can write $P=
\mathop{\varinjlim}\limits(\N^{a}\rightrightarrows\N^{b})$ for
non-negative integers $a,b$.  Since $P\to\overline{\sM}_X$ locally
lifts to a chart, the compositions $\N^{a}\to P\to\sM_X$ and
$\N^{b}\to P\to\sM_X$ induce respective DF($a$) and DF($b$) log
structures $\sM_{a}$ and $\sM_{b}$ on $X$ fitting into a limit diagram
of log schemes
$$(X,\sM_X)= \mathop{\varprojlim}\bigg((X_{b},\sM_{b})\rightrightarrows
(X_{a},\sM_{a})\bigg),$$     
where $X_a=X_b=X$. Finally, a similar application of Theorem 2.6 from
\cite{AC10} as used in our proof of Proposition \ref{prop:DFn-rep}
shows that $\wedge'X$ is represented by  
\[\mathop{\varprojlim}\bigg(\wedge(X_{b},\sM_{b})\rightrightarrows   
\wedge(X_{a},\sM_{a})\bigg).
%(\wedge X_{a},\sM_{\wedge X_{a}})\times_{(\wedge X_{b},\sM_{\wedge
%X_{b}})}(\wedge X_{a},\sM_{\wedge X_{a}})
\]
\end{proof}

\section{Logarithmic Gromov-Witten theory}\label{s:loggw}

\subsection{}Our construction of $\wedge X$ with its universal diagram
\[
\xymatrix{
(X,\sM_{\wedge X}\oplus\sN_{\wedge X}) \ar[r] \ar[d] & (X,\sM_X)\\
(\wedge X,\sM_{\wedge X}) \ar[d]& \\
\wedge X. &\\}
\]
mimics the situation for $\sK_\Gamma(X,\sM_X)$ described in the introduction.  The stack $\wedge X$ provides the evaluation space for $\sK_\Gamma(X,\sM_X)$, allowing us to define evaluation maps and the GW invariants they produce in this setting. Restricting the universal log stable map $(\mathfrak{C},\sM_{\mathfrak{C}})\to(X,\sM_X)$ over $(\sK_\Gamma(X,\sM_X),\sM_{\sK_\Gamma(X,\sM_X)})$ to the $i$-th marked point gives a morphism 
\[
\text{ev}_i:(\sK_\Gamma(X,\sM_X),\sM_{\sK_\Gamma(X,\sM_X)})\to(\wedge X,\sM_{\wedge_X}).
\]
of fibered categories.  We define these to be our evaluation morphisms. 

\begin{defn}\label{def:eval}
For $i=1,\ldots,n$, define a morphism 
\[
\text{ev}_i:(\sK_\Gamma(X,\sM_X),\sM_{\sK_\Gamma(X,\sM_X)})\to(\wedge X,\sM_{\wedge_X})
\] 
of categories fibered over $\mathfrak{LogSch}^\text{fs}$ as follows:

On the level of objects, a morphism $(S,\sM_S)\to (\sK_\Gamma(X,\sM_X),\sM_{\sK_\Gamma(X,\sM_X)})$ corresponds to a family of log stable maps 
 \[
\xymatrix{
(C,\sM_C) \ar[r] \ar[r]^{f} \ar[d] & (X,\sM_X)\\
(S,\sM_S)& }
\]
over a log scheme $(S,\sM_S)$.  Let $\Sigma_i\subset C$ be the image of the section $\sigma_i:S\to C$ corresponding to the $i$-th marked point.  Restricting to $\Sigma_i$ provides a locus in $C$ isomorphic to $S$ where $\sM_C$ has the structure of a standard log point.  This gives a family of standard log points in $(X,\sM_X)$
\[
\xymatrix{
(\Sigma_i,\sM_{C}|_{\Sigma_i}) \ar[r]^{f'|_{\Sigma_i}} \ar[d] & (X,\sM_{X}) \\
(S,\sM_{S})&
}
\]
which in turn corresponds to a morphism $(S,\sM_S)\to(\wedge X,\sM_{\wedge X})$.

The morphism on the level of arrows is defined similarly.
\end{defn}
% This definition parallels that of the evaluation maps for the moduli stack of twisted stable maps, where restricting the universal map to a marked point gives a map from a cyclotomic gerbe into the target orbifold $\sX$, which is exactly a point in the rigidified cyclotomic inertia stack.

\begin{prop}\label{prop:vfc}
Assume that the generalized DF pair $(X,\sM_{X})$ is log smooth. Then the stack of minimal log stable maps admits a virtual fundamental class $\virclass{\sK_\Gamma(X,\sM_X)}$.
\end{prop}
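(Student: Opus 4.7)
The plan is to apply the Behrend--Fantechi machinery by exhibiting an appropriate relative perfect obstruction theory on $\sK_\Gamma(X,\sM_X)$. The construction is a direct logarithmic translation of the classical one for ordinary stable maps, with the log cotangent complex of $(X,\sM_X)$ playing the role of the usual cotangent complex throughout.

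Concretely, I would work over an Olsson-style algebraic stack $\mathfrak{M}^{\log}$ parameterizing families of pre-stable log curves of genus $g$ with $n$ marked points (a suitable open substack of $\mathcal{L}og$ over $\mathfrak{M}_{g,n}$). Forgetting the map to the target yields a morphism
\[
\sK_\Gamma(X,\sM_X) \longrightarrow \mathfrak{M}^{\log}
\]
which remembers only the universal log curve $(\mathfrak{C},\sM_\mathfrak{C})$. Write $\pi\colon\mathfrak{C}\to \sK_\Gamma(X,\sM_X)$ for the structural map and $f\colon(\mathfrak{C},\sM_\mathfrak{C})\to(X,\sM_X)$ for the universal log stable map. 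The candidate obstruction theory is
\[
E^{\bullet} \;:=\; \bigl(R\pi_{*}\, f^{*}\Omega^{\log}_{X/\C}\bigr)^{\vee} \;\longrightarrow\; L_{\sK_\Gamma(X,\sM_X)/\mathfrak{M}^{\log}}.
\]
Log smoothness of $(X,\sM_X)$ enters precisely here: it makes $\Omega^{\log}_{X/\C}$ locally free, hence $f^{*}\Omega^{\log}_{X/\C}$ a locally free $\sO_\mathfrak{C}$-module, and since $\pi$ is proper and flat of relative dimension one, $R\pi_{*}$ of a locally free sheaf is perfect of amplitude $[0,1]$, making $E^{\bullet}$ perfect of amplitude $[-1,0]$ as Behrend--Fantechi require.

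The step I expect to require the most care --- and the main obstacle --- is the construction of the arrow to the relative cotangent complex together with the verification of the Behrend--Fantechi conditions (an isomorphism on $h^{0}$ and a surjection on $h^{-1}$). This reduces to a logarithmic deformation-theoretic statement: with the underlying log curve held fixed, infinitesimal deformations and obstructions to lifting $f$ are classified by $H^{0}$ and $H^{1}$ of $f^{*}T^{\log}_{X/\C}$. The necessary logarithmic deformation theory is developed in \cite{K09}, \cite{C10} and \cite{AC10}, and once it is in hand the verification is a routine adaptation of the classical Behrend--Fantechi argument. Applying their machinery (with the Kresch--Manolache extension to accommodate the Artin-stack base $\mathfrak{M}^{\log}$) then delivers the virtual fundamental class $\virclass{\sK_\Gamma(X,\sM_X)}$.
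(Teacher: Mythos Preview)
Your approach is sound, modulo one slip: in the displayed formula for $E^\bullet$ you want the log tangent sheaf $f^*T^{\log}_{X/\C}$, not $f^*\Omega^{\log}_{X/\C}$; you have it right two paragraphs later when you identify deformations and obstructions with $H^i$ of $f^*T^{\log}_{X/\C}$.

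The paper takes a different route in its choice of base. Rather than working relative to a stack $\mathfrak{M}^{\log}$ of pre-stable log curves, it uses the map $\alpha\colon \sK_\Gamma(X,\sM_X)\to\mathcal{T}or_{\C}$ classifying the canonical log structure on the moduli space, cites Section~4.1 of \cite{LogDeg} for the existence of a perfect obstruction theory $E^\bullet\to\mathbb{L}_{\sK_\Gamma(X,\sM_X)/\mathcal{T}or_{\C}}$, and then applies Manolache's refined pullback to define $\virclass{\sK_\Gamma(X,\sM_X)}=\alpha^!_{\mathbb{E}}[\mathcal{T}or_{\C}]$, using that $\mathcal{T}or_{\C}$ is pure of dimension~$0$ and stratified by global quotients in Kresch's sense. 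Your base $\mathfrak{M}^{\log}$ sits between $\sK_\Gamma(X,\sM_X)$ and $\mathcal{T}or_{\C}$ via a smooth morphism (log-smooth curves are unobstructed once the base log structure is fixed), so the two relative obstruction theories are compatible and produce the same class. What your approach buys is an explicit identification of $E^\bullet$ and a transparent parallel with the classical Behrend--Fantechi construction for ordinary stable maps; what the paper's approach buys is economy, since there is no need to set up $\mathfrak{M}^{\log}$ or verify its smoothness, and the pure-dimension-zero base makes the virtual pullback step immediate.
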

\begin{proof}
Denote by $\sT or_{\C}$ the stack parameterizing fs log structures over $\C$. Note that we can build a commutative log diagram as follows:
\begin{equation}\label{diag:total-diag}
\xymatrix{
(\mathfrak{C},\sM_\mathfrak{C}) \ar@/^/[rrrd] \ar[rd] \ar@/_/[rdd]_{} &&&\\
&(X_\sK,\sM_{X_\sK}) \ar[r] \ar[d] & (X_\sT,\sM_{X_\sT}) \ar[d] \ar[r] & (X,\sM_X) \ar[d]\\
&\sK_\Gamma(X,\sM_X) \ar[r]^{\ \ \ \alpha} & \sT or_{\C} \ar[r] & pt,
}
\end{equation}
where $(X_\sK,\sM_{X_\sK})$ and $(X_\sT,\sM_{X_\sT})$ are the respective fiber products log stacks, and the arrow $\alpha$ is induced by the canonical log structure on $\sK_\Gamma(X,\sM_X)$. An identical argument to Section 4.1 of \cite{LogDeg} implies that there exists a perfect obstruction theory $E.\to \mathbb{L}_{\sK_\Gamma(X,\sM_X)/\sT or_{\C}}$ for the map $\alpha:\sK_\Gamma(X,\sM_X)\to \sT or_{\C}$. The complex $E.$ determines a vector bundle stack $\mathbb{E}$ over $\sK_\Gamma(X,\sM_X)$, and the pair $(\alpha,\mathbb{E})$ satisfy Condition 2.8 (i.e., condition $\star$) from \cite{M08}.  Thus we may use Manolache's refined pullback $\alpha^!_{\mathbb{E}}:A_\ast(\sT or_{\C})\to A_\ast(\sK_\Gamma(X,\sM_X))$.  

Note that $\sT or_{\C}$ is of pure dimension $0$ and stratified by global quotients (as in Definition 4.5.3 of \cite{K99}).  Thus there is a fundamental class $[\sT or_{\C}]$, and we have $\virclass{\sK_\Gamma(X,\sM_X)}= \alpha^!_\mathbb{E}[\sT or_\C]$.

%A discussion of the deformation theory of $\sK_\Gamma(X,\sM_X)$ can be found in section 2.5 of \cite{C10}, where modules of deformations and obstructions are identified.  In terms of the formalism of \cite{BF97} and \cite{O05}, this amounts to a relative prefect obstruction theory $E.\to \mathbb{L}_{\sK_\Gamma(X,\sM_X)/\mathfrak{B}}$ for the map $\alpha:\sK_\Gamma(X,\sM_X)\to \mathfrak{B}$, where $\mathfrak{B}$ is the stack parameterizing log sources and targets as in \cite[Remark 2.1.7]{C10}. This defines a relative virtual fundamental class $\virclass{\sK_\Gamma(X,\sM_X)}$ for $\sK_\Gamma(X,\sM_X)$

\end{proof}

 This virtual class and the evaluation maps $\text{ev}_i$ provide the necessary ingredients for logarithmic GW invariants, which we now define.

\begin{defn} \label{def:gw-inv} For
  each $i=1,\ldots,n$, fix cohomology classes $\gamma_i\in
  H^\ast(\wedge X)_\Q$.  Pulling the $\gamma_i$ back along the
  evaluation maps $\text{ev}_i$, define the logarithmic GW invariant
  $\left<\gamma_1,\ldots,\gamma_n\right>^{(X,\sM_X)}_{\Gamma}$ by the
  product 
    \[
    \left<\gamma_1,\ldots,\gamma_n\right>^{(X,\sM_X)}_{\Gamma} := \left(\prod_{i=1}^n \text{ev}_i^\ast\gamma_i \right)\cap \left[\sK_\gamma(X,\sM_X)\right]^{\text{vir}}.
    \]
\end{defn}    

%Since cotangent line bundles from the marked points exist as usual, decedent invariants can similarly be defined.  Cotangent line bundle classes on the target are a unique feature of relative GW theory, and this data should manifest itself in the logarithmic context.  We hope to include a further study of these aspects of the theory in subsequent work.
%
% Constructing the stacks S_x and G_x
%************************************************************************************************************

\bibliographystyle{amsalpha}             % (uses file "plain.bst")
\bibliography{logpoints}       % expects file "myrefs.bib"
\end{document}